  \newcommand{\C}{\mathbb{C}}
  \newcommand{\R}{\mathbb{R}}
  \newcommand{\HQ}{\ensuremath{\mathbb{H}}}
  \newcommand{\N}{\mathbb{N}}
  \newcommand{\qi}{\ensuremath{\mbox{\boldmath $i$}}}
  \newcommand{\sqi}{\ensuremath{\mbox{\boldmath $\scriptstyle i$}}}
  \newcommand{\qj}{\ensuremath{\mbox{\boldmath $j$}}}
  \newcommand{\sqj}{\ensuremath{\mbox{\boldmath $\scriptstyle j$}}}
  \newcommand{\qk}{\ensuremath{\mbox{\boldmath $k$}}}
  \newtheorem{thm}{Theorem}[section]
  \newtheorem{cor}[thm]{Corollary}
  \theoremstyle{definition}
  \newtheorem{defn}[thm]{Definition}
  \theoremstyle{remark}
  \newcommand{\be}{\begin{equation}}
  \newcommand{\ee}{\end{equation}} 
  \newcommand{\vect}[1]{\mathbf{#1}}
  \newcommand{\qftr}[1]{\overset{\triangleright}{#1}}
\begin{document}

\title[Quaternion FT \& Generalization]
 {Quaternion Fourier Transform on Quaternion Fields and Generalizations}
\author[E. Hitzer]{Eckhard M. S. Hitzer}

\address{%
Department of Applied Physics\\
University of Fukui\\
3-9-1 Bunkyo\\
910-8507 Fukui\\
Japan}

\email{hitzer@mech.fukui-u.ac.jp}

\thanks{I thank my family and FTHD organizer S.L. Eriksson.}

\subjclass{Primary 42A38; Secondary 11R52}

\keywords{Quaternions, Fourier transform, Clifford algebra, 
volume-time algebra, spacetime algebra, automorphisms}

\date{October 19, 2006}
\dedicatory{Soli Deo Gloria}

\begin{abstract}
We treat the quaternionic
Fourier transform (QFT) applied to quaternion fields
and investigate QFT properties useful for applications. 
Different forms of the QFT
lead us to different Plancherel theorems. 
We relate the QFT computation for quaternion fields to the QFT of
real signals. We research the general linear ($GL$) transformation 
behavior of the QFT with matrices, Clifford geometric algebra and with examples.
We finally arrive
at wide-ranging non-commutative multivector FT generalizations of the QFT. Examples given are 
new volume-time and spacetime algebra Fourier transformations.
\end{abstract}

\maketitle



\section{Introduction}

This paper strives to deepen the understanding of the quaternionic
Fourier transform (QFT) applied to quaternion fields $f: \R^2 \rightarrow \HQ$, 
and not only to real signals $f: \R^2 \rightarrow \R$. 
We research QFT properties useful for applications
to partial differential equations, image processing and optimized
numerical implementations. We investigate how different forms of the QFT
allow to establish scalar and quaternion valued Plancherel theorems. 

We show systematically how to reduce the computation for quaternion fields to the case of
real signal computations, and on the other hand how results for real signals can be 
generalized to quaternion fields. 

The third major focus is on deriving the behavior of the QFT under 
$GL(\R^2)$ automorphisms. To do this we split the QFT appropriately,
and work with invariant techniques of Clifford geometric algebra \cite{HS:CAtoGC} 
to establish and
understand the automorphism behavior. Details are brought to light
by looking at the examples of stretches (dilations), reflections
and rotations. 
 
Together with isomorphisms (to Clifford subalgebras) we finally arrive
at wide-ranging generalizations of the QFT. These new non-commutative multivector
Fourier transforms operate
on functions from domain spaces $\R^{m,n}$ (with $m,n\in\N_0$) to Clifford algebras $ Cl_{m,n} $
or subalgebras thereof. 
To demonstrate the method, we work out 
generalizations to volume-time and to spacetime algebra 
Fourier transformations,
and provide some physical interpretation.

\subsection{Basic facts about Quaternions \label{sc:bquat}}

Gauss, Rodrigues and Hamilton \cite{TN:VisCA} invented the four-dimensional quaternion algebra 
$\HQ$ over $\R$ with three imaginary units $\qi$, $\qj$, $\qk$ and multiplication laws:

\be
 \qi \qj = -\qj \qi = \qk, \,\,
 \qj \qk = -\qk \qj = \qi, \,\,
 \qk \qi = -\qi \qk = \qj, \,\, 
 \qi^2=\qj^2=\qk^2=\qi \qj \qk = -1.
\label{eq:quat}
\end{equation}
Quaternions are isomorphic to the Clifford geometric algebra $Cl_{0,2}$ 
of $\R^{0,2}$, and to the even subalgebra $Cl_{3,0}^+$
of the Clifford geometric algebra $Cl_{3,0}$ of $\R^3$:
\be
  \label{eq:isom}
  \HQ\cong Cl_{0,2}\cong Cl_{3,0}^+.
\end{equation}
$Cl_{3,0}^+$ has, with an orthonormal basis $\{\vect{e}_1,\vect{e}_2,\vect{e}_3 \}$ 
of $\R^3$, the four dimensional basis 
\be
\{1,
\vect{e}_{32}=\vect{e}_3\vect{e}_2,
\vect{e}_{13}=\vect{e}_1\vect{e}_3,
\vect{e}_{21}=\vect{e}_2\vect{e}_1\}.
\end{equation}
Every quaternion 
\be
  q=q_r + q_i \qi + q_j \qj + q_k \qk \in \HQ, \quad 
  q_r,q_i, q_j, q_k \in \R
  \label{eq:aquat}
\end{equation}
has a \textit{quaternion conjugate} (corresponding to reversion in $Cl_{3,0}^+$)
\be
  \tilde{q} = q_r - q_i \qi - q_j \qj - q_k \qk,
\end{equation}
This leads to a \textit{norm} of $q\in\HQ$ defined as
\be
  |q| = \sqrt{q\tilde{q}} = \sqrt{q_r^2+q_i^2+q_j^2+q_k^2}.
\end{equation}

\subsection{Convenient rewriting of quaternions}

In some applications it proves convenient to replace $\qk$ with
$\qk=\qi\qj$ and write a quaternion as
\be
  q=q_r + \qi q_i  + q_j \qj + \qi q_k \qj,
  \label{eq:ijform}
\end{equation}
neatly keeping all $\qi$ to the left and all $\qj$ to the right of each term.
A second convenient form is the \textit{split}
\begin{gather}
  q = q_+ + q_-, \quad q_{\pm} = \frac{1}{2}(q\pm \qi q \qj).
  \label{eq:pmform}
\end{gather}
Explicitly in real components
$q_r,q_i, q_j, q_k \in \R$ using (\ref{eq:quat}) the split (\ref{eq:pmform})
produces:
\be
  q_{\pm} = \{q_r\pm q_k + \qi(q_i\mp q_j)\}\frac{1\pm \qk}{2}
       = \frac{1\pm \qk}{2} \{q_r\pm q_k + \qj(q_j\mp q_i)\}.
  \label{eq:qpm}
\end{equation}
The real scalar part $q_r$ (grade zero selection \cite{HS:CAtoGC} 
in Clifford geometric algebra) 
$$
  \langle q \rangle_0 = q_r
$$ 
leads to a cyclic multiplication symmetry
\be
  \langle qrs \rangle_0 = \langle rsq \rangle_0, \quad \forall q,r,s \in \HQ.
\label{eq:cycs}
\end{equation}

\subsection{Quaternion module}

For quaternion-valued functions $f,g: \R^2 \rightarrow \HQ$ we can define the 
quaternion-valued inner product
\be
  (f,g) = \int_{\R^2} f(\vect{x})\,\tilde{g}(\vect{x})\,d^2\vect{x} \,, 
  \quad \quad
  \text{ with } \quad d^2\vect{x} = dx dy,
\label{eq:intip}
\end{equation}
with symmetric real scalar part \cite{TB:thesis}
\be
  \langle f,g\rangle = \frac{1}{2}[(f,g)+(g,f)] 
  = \int_{\R^2} \langle f(\vect{x})\,\tilde{g}(\vect{x})\rangle_0 d^2\vect{x} \, .
\label{eq:intsp}
\end{equation}
Both (\ref{eq:intip}) and (\ref{eq:intsp}) lead to the  
$L^2(\R^2;\HQ)$-norm
\be
  \| f \| = \sqrt{(f,f)} 
          = \sqrt{\langle f,f\rangle} 
          = \int_{\R^2} |f(\vect{x})|^2 \,d^2\vect{x}
          \,\,.
\end{equation}
A \textit{quaternion module} $L^2(\R^2;\HQ)$ is then defined as
\be
  L^2(\R^2;\HQ ) = \{f | f:\R^2 \rightarrow \HQ, \|f\| < \infty\}.
\end{equation}

\section{The quaternion Fourier transform}

Before defining the quaternion Fourier transform (QFT), we briefly outline
its relationship with Clifford Fourier transformations. 

Brackx et al.~\cite{BDS:CA} extended the Fourier transform to multivector valued 
function-distributions in $Cl_{0,n}$ with compact support. 
A related applied approach for hypercomplex Clifford Fourier 
transformations\footnote{%
This is the kind of Clifford Fourier transform to which we will refer in
section \ref{sc:qftr}.} in $Cl_{0,n}$ was followed by B\"{u}low et. al.~\cite{GS:ncHCFT}.

By extending the classical trigonometric exponential function 
$\exp(j \, \mbox{\boldmath $x$} \ast \mbox{\boldmath $\xi $} )$ (where $\ast $ denotes the 
scalar product of $\mbox{\boldmath $x$}\in \R^m$ with $\mbox{\boldmath $\xi $}\in \R^m$,
$j$ the imaginary unit) 
in~\cite{LMQ:CAFT94,AM:CAFT96}, McIntosh et. al. generalized the classical
Fourier transform. Applied to a function of $m$ real variables this generalized
Fourier transform is holomorphic in $m$ complex variables and its inverse is 
\textit{monogenic}
in $m+1$ real variables, thereby effectively extending the function of $m$ real
variables to a monogenic function of $m+1$ real variables 
(with values in a \textit{complex} Clifford algebra). 
This generalization has significant applications to harmonic analysis,
especially to singular integrals on surfaces in $\R^{m+1}$. 
Based on this approach Kou and Qian obtained a Clifford Payley-Wigner theorem
and derived Shannon interpolation of band-limitted functions using the monogenic
sinc function~\cite[and references therein]{TQ:PWT}. 
The Clifford Payley-Wigner theorem also allows to derive left-entire 
(left-monogenic in the whole $\R^{m+1}$) functions from square integrable
functions on $\R^{m}$ with compact support. 

The real $n$-dimensional volume element
$i_n = \mbox{\boldmath $e$}_{1}\mbox{\boldmath $e$}_{2} \ldots \mbox{\boldmath $e$}_{n}$ 
of $Cl_{n,0}$ over the field
of the reals $\R$ has been used in \cite{ES:CFTonVF,HM:CFTUP,EM:CFaUP,HM:CFToMVF} 
to construct and apply Clifford Fourier 
transformations for $n=2,3 \, (\rm mod \, 4)$ with kernels 
$\exp(-i_n \vect{x}\ast \vect{u}),\; \vect{x}, \vect{u} \in \R^n$. 
This $i_n$ has a clear geometric interpretation. 
Note that $i_n^2=-1$ for $n=2,3 \, (\rm mod \, 4)$.

Ell \cite{TE:QFTpds} defined the quaternion Fourier transform (QFT) 
for application to 2D linear time-invariant systems of PDEs. 
Ell's QFT belongs to the growing family of Clifford Fourier transformations
because of (\ref{eq:isom}). But the left and right placement of the
exponential factors in definition \ref{df:QFT} distinguishes it. 
Later the QFT was applied extensively to 2D image processing,
including color images \cite{TB:thesis, TE:QFTpds, GS:ncHCFT}. 
This spurred research into optimized numerical 
implementations \cite{MF:thesis, PDC:effQFT}. 
Ell \cite{TE:QFTpds} and others \cite{TB:thesis, CCZ:comQuat} also
investigated related \textit{commutative} hypercomplex Fourier transforms 
like in the commutative subalgebra of $Cl_{4,0}$ with subalgebra basis 
$\{1, \vect{e}_{12}, \vect{e}_{34}, \vect{e}_{1234}\}$, 
\be
  \vect{e}_{12}^2=\vect{e}_{34}^2=-1, \quad \vect{e}_{1234}^2=+1\,\,.
\end{equation}

\begin{defn}[Quaternion Fourier transform (QFT)]
\label{df:QFT}
The quaternion Fourier transform\footnote{We also assume always that
$\int_{\R^2}|f(\vect{x})|\,d^2\vect{x}$ exists as well. But we do not
explicitly write this condition again in the rest of the paper.
} 
 $\hat{f}: \R^2 \rightarrow \HQ $ of 
$ f \in L^2(\R^2;\HQ)  $, 
$\vect{x}=x\vect{e}_1+y\vect{e}_2 \in \R^2 $, and
$\vect{u}=u\vect{e}_1+v\vect{e}_2 \in \R^2 $ is defined\footnote{%
For real signals $ f \in L^2(\R^2;\R)  $ 
the detailed relationship of the QFT of definition \ref{df:QFT} with the 
conventional scalar FT, i.e. with the even $\cos$-part and the odd 
$\sin$-part are given on pp. 191 and 192 of \cite{GS:ncHCFT}. 
With the help of (\ref{eq:QFTcp}) this can easily be extended 
to the full QFT of quaternion-valued $ f \in L^2(\R^2;\HQ)  $. 
} 
as
\be
  \hat{f}(\vect{u}) = \int_{\R^2} e^{-\sqi xu} f(\vect{x}) \,e^{-\sqj yv} d^2\vect{x}\, .
\label{eq:qft}
\end{equation}
\end{defn}
\noindent
The QFT can be \textit{inverted} by
\be
  f(\vect{x}) = \frac{1}{(2\pi)^2}\int_{\R^2} e^{\sqi xu} 
                \hat{f}(\vect{u}) \,e^{\sqj yv} d^2\vect{u} \, ,
  \label{eq:iqft}
\end{equation}
with $d^2\vect{u} = du dv$.

\subsection{Rewriting and splitting functions}

Let $f: \R^2 \rightarrow \HQ\,\,$ ( or $f \in L^2(\R^2;\HQ)$ ). 
Using four $\R^2 \rightarrow \R$ ( or $L^2(\R^2;\R)$ ) real component functions
$f_r, f_i, f_j,$ and $f_k$ we can decompose and rewrite $f$ with (\ref{eq:ijform}) as
\be
  f= f_r + f_i \qi + f_j \qj + f_k \qk = f_r + \qi f_i  + f_j \qj + \qi f_k \qj.
  \label{eq:flirj}
\end{equation}
We can also split the functions $f$ [similar to $q_{\pm}$ in (\ref{eq:pmform})] into
\be
  f = f_+ + f_-, \quad
  f_+ = \frac{1}{2}(f + \qi f \qj), \quad 
  f_- = \frac{1}{2}(f - \qi f \qj).
\label{eq:fpm}
\end{equation}
According to (\ref{eq:qpm}) the two components $f_{\pm}$ can also be rewritten as
\be
  f_{\pm} = \{f_r\pm f_k + \qi(f_i\mp f_j)\}\frac{1\pm \qk}{2}
       = \frac{1\pm \qk}{2} \{f_r\pm f_k + \qj(f_j\mp f_i)\}.
\label{eq:fpmcomp}
\end{equation} 
As an example let us consider the split of the product of exponential functions under the 
QFT integral in (\ref{eq:qft}). 
Using Euler's formula and trigonometric addition theorems 
the split leads to
\begin{gather}
  K = e^{-\sqi xu} e^{-\sqj yv} = K_+ + K_-, 
\nonumber \\
  K_{\pm} = e^{-\sqi(xu \mp yv)}\frac{1\pm\qk}{2} 
  = \frac{1\pm\qk}{2} e^{-\sqj(yv \mp xu)}.
  \label{eq:kpm}
\end{gather}

\subsection{Useful properties of the QFT \label{sc:UsefulPr}}

\begin{table}
\caption{Properties of the quaternion Fourier transform (QFT) 
\label{tb:QFTp1}
of quaternion functions
(Quat. Funct.) $f,g \in L^2(\R^2;\HQ)$, 
with $\vect{x}, \vect{u} \in \R^2$,
constants
$\alpha, \beta \in \{q | \,q=q_r+q_i  \qi, \, q_r,q_i\in\R \}$, 
$\alpha^{\,\prime}, \beta^{\prime} \in \{q | \,q=q_r+q_j  \qj, \, q_r,q_j\in\R \}$,
$a,b \in \R \setminus \{ 0\}$,  
$\vect{x}_0=x_0 \vect{e}_1+y_0 \vect{e}_2, \,
\vect{u}_0=u_0 \vect{e}_1+v_0 \vect{e}_2 \in \R^2$ and
$m,n \in \N_0$. }
\begin{center}
\begin{tabular}{llll} 
\hline
\textbf{Property}         &  \textbf{Quat. Funct.}      &   \textbf{QFT} 
\\ 
\hline
Left linearity      & $\alpha f(\vect{x})$+$\beta \;g(\vect{x})$     &  
$\alpha \hat{f}(\vect{u})$+ $\beta \hat{g}(\vect{u})$ 
\\ 
Right linearity     & $ f(\vect{x})\alpha^{\,\prime}$+$\;g(\vect{x})\beta^{\prime} $  &  
$\hat{f}(\vect{u})\alpha^{\,\prime}$+ $\hat{g}(\vect{u})\beta^{\prime}$ 
\\ 
$\vect{x}$-Shift    &  $f(\vect{x}-\vect{x}_0)$   & 
$e^{-\sqi x_0 u} \hat{f}(\vect{u}) \, e^{-\sqj y_0 v}$    
\\
Modulation    
  & $e^{\sqi x u_0} f(\vect{x}) \, e^{\sqj y v_0}$   
  & $\hat{f}(\vect{u}-\vect{u}_0)$ 
\\
Dilation\footnotemark  
  & $f(a \,x \vect{e}_1 + b \,y \vect{e}_2)$ 
  & $\frac{1}{|ab|}\hat{f}(\frac{u}{a}\vect{e}_1 + \frac{v}{b}\vect{e}_2)$
\\
Part. deriv. 
  & $ \frac{\partial^{m+n}}{\partial x^m \partial y^n}f(\vect{x}) $  
  & $ (\qi u)^m \hat{f}(\vect{u}) (\qj v)^n $ 
\\
Powers\footnotemark of $x,y$ 
  & $ x^m y^n f(\vect{x}) $ 
  & $ \qi^m \frac{\partial^{m+n}}{\partial u^m \partial v^n} \hat{f}(\vect{u}) \, \qj^n $
\\
Powers\addtocounter{footnote}{-1}\footnotemark 
  of $\qi,\qj$ 
  & $ \qi^m f(\vect{x}) \,\qj^n $ 
  & $ \qi^m \hat{f}(\vect{u}) \, \qj^n $
\\
Plancherel\addtocounter{footnote}{-1}\footnotemark 
  & $ \langle f,g \rangle =$ 
  & $ \frac{1}{(2 \pi)^2} \langle \hat{f},\hat{g} \rangle $ 
\\
Parseval\footnotemark 
  & $ \| f \| =$
  & $ \frac{1}{2\pi} \| \hat{f} \| $
\end{tabular}
\end{center}
\end{table}
\addtocounter{footnote}{-2}
\footnotetext{B\"{u}low \cite{TB:thesis} omits the absolute value 
  signs for the determinant of
  the transformation.}
\addtocounter{footnote}{1}\footnotetext{Theorems \ref{th:powxy}, \ref{th:powij} and  
  \ref{th:plan}.}
\addtocounter{footnote}{1}\footnotetext{Corollary  
  \ref{cr:pars}.}\addtocounter{footnote}{0}

We first show a 
\textit{new} Plancherel theorem with respect to the scalar product (\ref{eq:intsp}). 
\begin{thm}[QFT Plancherel]
\label{th:plan}
The scalar product (\ref{eq:intsp}) of two quaternion
module functions $ f,g \in L^2(\R^2;\HQ) $ is given by the scalar product of the of the
corresponding QFTs $\hat{f}$ and $\hat{g}$
\be
  \langle f,g \rangle = \frac{1}{(2 \pi)^2} \langle \hat{f},\hat{g} \rangle.
\label{eq:plan}  
\end{equation}
\end{thm}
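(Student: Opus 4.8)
The plan is to reduce the quaternionic identity (\ref{eq:plan}) to the familiar scalar Fourier orthogonality relation, using the cyclic symmetry (\ref{eq:cycs}) of the real scalar part to work around the non-commutativity of $\qi$ and $\qj$. Concretely, I would start from the right-hand side, writing $\langle \hat{f},\hat{g}\rangle = \int_{\R^2}\langle \hat{f}(\vect{u})\,\tilde{\hat{g}}(\vect{u})\rangle_0\,d^2\vect{u}$ via (\ref{eq:intsp}), and insert the defining integral (\ref{eq:qft}) for $\hat{f}$ and for $\hat{g}$, the latter written in a second integration variable $\vect{x}'$.

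Next I would compute $\tilde{\hat{g}}(\vect{u})$. Because quaternion conjugation reverses products and negates each imaginary unit, $\widetilde{e^{-\sqi x' u}} = e^{\sqi x' u}$ and $\widetilde{e^{-\sqj y' v}} = e^{\sqj y' v}$, so $\tilde{\hat{g}}(\vect{u}) = \int_{\R^2} e^{\sqj y' v}\,\tilde{g}(\vect{x}')\,e^{\sqi x' u}\,d^2\vect{x}'$. Forming the product $\hat{f}(\vect{u})\,\tilde{\hat{g}}(\vect{u})$ yields a double integral whose integrand, under $\langle\,\cdot\,\rangle_0$, I would reorganize by cyclicity (\ref{eq:cycs}): moving the rightmost factor $e^{\sqi x' u}$ to the front lets the two $\qi$-exponentials merge into $e^{\sqi(x'-x)u}$ (they commute, sharing the unit $\qi$), while the two inner $\qj$-exponentials merge into $e^{\sqj(y'-y)v}$. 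The scalar-part integrand then reads $\langle e^{\sqi(x'-x)u}\,f(\vect{x})\,e^{\sqj(y'-y)v}\,\tilde{g}(\vect{x}')\rangle_0$.

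I would then perform the $\vect{u}=(u,v)$ integration. Since $\{1,\qi\}$ and $\{1,\qj\}$ each span a commutative subalgebra isomorphic to $\C$, the one-dimensional relation $\int_{\R} e^{\sqi(x'-x)u}\,du = 2\pi\,\delta(x'-x)$ applies: the even cosine part produces the Dirac delta while the odd sine part integrates to zero, and likewise in $v$. This collapses the $\vect{u}$-integration to a factor $(2\pi)^2\,\delta(x'-x)\,\delta(y'-y)$ multiplying $\langle f(\vect{x})\,\tilde{g}(\vect{x}')\rangle_0$; integrating out $\vect{x}'$ against the deltas sets $\vect{x}'=\vect{x}$ and leaves $(2\pi)^2\int_{\R^2}\langle f(\vect{x})\,\tilde{g}(\vect{x})\rangle_0\,d^2\vect{x} = (2\pi)^2\,\langle f,g\rangle$, which is exactly (\ref{eq:plan}).

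The main obstacle is the bookkeeping of non-commutativity, arranging that the two $\qi$-factors and the two $\qj$-factors can be brought together without entangling one pair with the other; this is precisely where the left/right placement of the kernels in Definition \ref{df:QFT} and the cyclicity (\ref{eq:cycs}) are indispensable, and why passing to the \emph{real scalar part} rather than the full quaternionic product is what makes the argument close. A secondary point needing care is the rigorous handling of the delta manipulations, i.e.\ justifying the interchange of integration orders and the vanishing of the sine cross-terms for $f,g\in L^2(\R^2;\HQ)$; this is the standard density argument underlying any Plancherel theorem, which I would either invoke directly or first establish on a dense class of Schwartz functions and then extend by continuity.
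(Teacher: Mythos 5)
Your proof is correct, but it runs in the opposite direction from the paper's. The paper starts from $\langle f,g\rangle$, substitutes the \emph{inverse} QFT (\ref{eq:iqft}) for $f$, interchanges the order of integration, uses the cyclic symmetry (\ref{eq:cycs}) to carry the factor $e^{\sqi ux}$ around to the right, and then recognizes the resulting inner integral as $\tilde{\hat{g}}(\vect{u})$ -- no delta functions ever appear, because the analytic content is entirely absorbed into the inversion formula, which the paper takes as given. You instead start from $\langle \hat{f},\hat{g}\rangle$, insert the \emph{forward} transforms of both $f$ and $g$, use the same cyclicity (\ref{eq:cycs}) to merge the two $\qi$-kernels and the two $\qj$-kernels separately, and then invoke the distributional identity $\int_{\R}e^{\sqi au}\,du=2\pi\,\delta(a)$ to collapse the frequency integration. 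The two arguments are essentially dual: yours re-derives the orthogonality of the kernels explicitly and therefore needs the density/Schwartz-class justification you correctly flag at the end, while the paper's hides exactly that step inside (\ref{eq:iqft}) and is consequently shorter. Both hinge on the same two structural facts -- the left/right placement of the kernels in Definition \ref{df:QFT} keeping the $\qi$- and $\qj$-factors from entangling, and the cyclicity of $\langle\,\cdot\,\rangle_0$ compensating for non-commutativity -- so your identification of these as the crux is exactly right. One small point worth making explicit in your write-up: after the $u$- and $v$-integrations the resulting factors $2\pi\,\delta(x'-x)$ and $2\pi\,\delta(y'-y)$ are real scalars, which is why they may be pulled out of the quaternion product freely; this is where the argument would break if the kernels did not reduce to real distributions.
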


\begin{proof}
For $ f,g \in L^2(\R^2;\HQ) $ we calculate the scalar product (\ref{eq:intsp})
\begin{gather}
  \langle f,g \rangle 
  = \int_{\R^2} \langle f(\vect{x})\tilde{g}(\vect{x}) \rangle_0 d^2\vect{x}
\nonumber \\  
  = \frac{1}{(2\pi)^2}\int_{\R^2} \langle 
    \int_{\R^2}e^{\sqi u x}\hat{f}(\vect{u})e^{\sqj v y} d^2\vect{u} \,
    \tilde{g}(\vect{x}) \rangle_0 d^2\vect{x}
\nonumber \\  
  = \frac{1}{(2\pi)^2}\int_{\R^2} \langle
    \hat{f}(\vect{u})
    \int_{\R^2}e^{\sqj v y} \tilde{g}(\vect{x})e^{\sqi u x}d^2\vect{x}
    \rangle_0 d^2\vect{u}
\nonumber \\  
  = \frac{1}{(2\pi)^2}\int_{\R^2} \langle
    \hat{f}(\vect{u})
    [\int_{\R^2} e^{-\sqi u x}g(\vect{x})e^{-\sqj v y}d^2\vect{x}]^{\sim}\,
    \rangle_0 d^2\vect{u}
\nonumber \\  
  = \frac{1}{(2\pi)^2}\int_{\R^2} \langle
    \hat{f}(\vect{u})
    \tilde{\hat{g}}{(\vect{u})}
    \rangle_0 d^2\vect{u}
  = \frac{1}{(2 \pi)^2} \langle \hat{f},\hat{g} \rangle. 
\label{eq:planp}
\end{gather}
In the second equality of (\ref{eq:planp}) we replaced $f$ with its inverse
QFT expression (\ref{eq:iqft}). In the third equality we exchanged the order of
integration and we used the cyclic symmetry (\ref{eq:cycs}). For the fourth equality
we simply pulled the reversion outside the square brackets $[\ldots]$ and obtained 
the QFT $\hat{g}(\vect{u})$, which proves (\ref{eq:plan}) according to (\ref{eq:intsp}).
\end{proof}

For $g=f$ the Plancherel theorem \ref{th:plan} has a QFT Parseval theorem (also called
Rayleigh's theorem) as a direct corollary. 
\begin{cor}[QFT Parseval]
\label{cr:pars}
The $L^2(\R^2;\HQ)$-norm of a quaternion module function $f\in L^2(\R^2;\HQ)$ is given
by the $L^2(\R^2;\HQ)$-norm of its QFT multiplied by $1/(2\pi)$
\be
  \|f\| = \frac{1}{2\pi}\|\hat{f}\|.
\label{eq:pars}
\end{equation}
\end{cor}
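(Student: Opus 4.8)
The plan is to obtain the Parseval identity \eqref{eq:pars} directly as the diagonal case of the Plancherel theorem \ref{th:plan}, which is why it is stated as a corollary. First I would set $g=f$ in \eqref{eq:plan}, so that the general scalar-product identity $\langle f,g\rangle = \frac{1}{(2\pi)^2}\langle \hat{f},\hat{g}\rangle$ immediately specializes to $\langle f,f\rangle = \frac{1}{(2\pi)^2}\langle \hat{f},\hat{f}\rangle$. This is the entire arithmetic content of the corollary; no new integral manipulation is needed.

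Next I would translate the scalar products back into norms using the definitions from Section \ref{sc:bquat}. Recall that the $L^2(\R^2;\HQ)$-norm satisfies $\|f\| = \sqrt{\langle f,f\rangle}$, so that $\langle f,f\rangle = \|f\|^2$ and likewise $\langle \hat{f},\hat{f}\rangle = \|\hat{f}\|^2$. Substituting these into the specialized Plancherel identity gives $\|f\|^2 = \frac{1}{(2\pi)^2}\|\hat{f}\|^2$.

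Finally I would take the (nonnegative) square root of both sides. Since the $L^2$-norm is by definition nonnegative, extracting the square root is unambiguous and yields $\|f\| = \frac{1}{2\pi}\|\hat{f}\|$, which is exactly \eqref{eq:pars}.

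There is essentially no obstacle here: the work has already been done in Theorem \ref{th:plan}, where the cyclic symmetry \eqref{eq:cycs} and the inversion formula \eqref{eq:iqft} carried the proof. The only point deserving a word of care is the passage from the squared identity to the norm identity, i.e. confirming that $\|f\|$ really denotes $\sqrt{\langle f,f\rangle}$ rather than $\langle f,f\rangle$ itself, so that the factor $1/(2\pi)^2$ becomes $1/(2\pi)$ under the square root; once that convention is fixed the corollary follows in one line.
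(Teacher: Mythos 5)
Your proposal is correct and follows exactly the paper's route: the paper states the Parseval identity as a direct corollary of Theorem \ref{th:plan} by setting $g=f$, with the passage from $\langle f,f\rangle = \frac{1}{(2\pi)^2}\langle\hat{f},\hat{f}\rangle$ to \eqref{eq:pars} via $\|f\|=\sqrt{\langle f,f\rangle}$ left implicit. Your extra care about the square-root convention is sensible but adds nothing beyond what the paper intends.
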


This leads to the following observations:
\begin{itemize}
  \item The way we obtained the Parseval theorem of cor.~\ref{cr:pars} is much simpler than 
the proofs in \cite{TB:thesis, TE:QFTpds}. 
  \item For two-dimensional linear time-invariant
partial differential systems the Parseval theorem provides an appropriate method
to measure controller performance. 
  \item In signal processing it states that the
signal energy is preserved by the QFT.
\end{itemize}

For solving PDEs with quaternionic (or real) coefficient polynomials in
$x,y \in \R^2$ we show the following two theorems.  
In this context we note again that every quaternionic (or real) 
coefficient polynomial in the
variables $x,y \in \R^2$ can be brought into a form having 
factors of $\qi\in \HQ$ to the left side of each term and 
factors of $\qj \in \HQ$ to the right side of each term (compare (\ref{eq:flirj})).  
\begin{thm}[Powers of $x,y$]
\label{th:powxy}
The QFT of a quaternion module function 
$x^my^nf(\vect{x})$ $\in L^2(\R^2;\HQ), \,
\vect{x}=x\vect{e}_1+y\vect{e}_2 \in \R^2, \, 
f\in L^2(\R^2;\HQ), \, m,n \in \N_0$
is given by
\be
  \widehat{x^my^n\hspace*{-0.7mm}f}(\vect{u}) 
  = \qi^m \frac{\partial^{m+n}}{\partial u^m \partial v^n} \hat{f}(\vect{u}) \, \qj^n.
\end{equation}
\end{thm}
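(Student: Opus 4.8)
The plan is to establish the result directly from the QFT definition (\ref{eq:qft}) by differentiating under the integral sign, exploiting the one-sided placement of the exponential kernels. First I would record the two elementary kernel identities that do all the work. Because $x$ is a real scalar commuting with $\qi$, differentiating the left factor gives $\frac{\partial}{\partial u} e^{-\sqi xu} = -\qi x\, e^{-\sqi xu}$, and multiplying on the left by $\qi$ while using $\qi^2=-1$ yields
\be
  x\, e^{-\sqi xu} = \qi\, \frac{\partial}{\partial u} e^{-\sqi xu}.
\ee
Iterating (the constant $\qi$ and the scalar $x$ both pass freely through the $u$-derivative) gives $x^m e^{-\sqi xu} = \qi^m \frac{\partial^m}{\partial u^m} e^{-\sqi xu}$. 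Symmetrically, since $y$ commutes with $\qj$ and $\qj^2=-1$, differentiating the right factor and multiplying on the \emph{right} by $\qj$ produces $y\, e^{-\sqj yv} = (\frac{\partial}{\partial v} e^{-\sqj yv})\,\qj$, hence $y^n e^{-\sqj yv} = (\frac{\partial^n}{\partial v^n} e^{-\sqj yv})\,\qj^n$.

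Next I would insert these into the QFT of $x^m y^n f$. Because $x^m y^n$ are real scalars they commute past $f$ and past the kernels, so the integrand factors as $\qi^m (\frac{\partial^m}{\partial u^m} e^{-\sqi xu})\, f(\vect{x})\, (\frac{\partial^n}{\partial v^n} e^{-\sqj yv})\,\qj^n$. The constant quaternion $\qi^m$ then pulls out to the left of the integral and $\qj^n$ to the right, which is legitimate by the left and right linearity of the QFT integral. What remains in the middle is an integral whose only dependence on $u,v$ sits in the two exponentials, so the $u$- and $v$-derivatives may be taken outside the integral, reconstructing $\frac{\partial^{m+n}}{\partial u^m \partial v^n} \hat{f}(\vect{u})$. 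Collecting the pieces gives exactly the claimed formula.

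The main obstacle is not the algebra but the two analytic interchanges: justifying differentiation under the integral sign and the commuting of the limit operations with integration. Here I would appeal to the standing integrability hypotheses (the footnote to Definition \ref{df:QFT} together with membership in $L^2(\R^2;\HQ)$), observing that $x^m y^n f \in L^2(\R^2;\HQ)$ makes the differentiated integrand dominated by an integrable majorant, so dominated convergence applies. The second point requiring care is the strict bookkeeping of non-commutativity: every factor of $\qi$ must be kept to the left and every factor of $\qj$ to the right throughout, and it is precisely the one-sided structure of the kernel in (\ref{eq:qft}) together with $\qi^2=\qj^2=-1$ that makes the $u$-derivative interact only with the left exponential and the $v$-derivative only with the right one. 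An alternative, fully equivalent route is to prove the one-variable base cases $(m,n)=(1,0)$ and $(0,1)$ and then induct separately on $m$ and $n$, but the direct substitution above is shorter and keeps the left/right placement transparent.
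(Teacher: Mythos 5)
Your proposal is correct and uses essentially the same argument as the paper: the key identity $x\,e^{-\sqi xu} = \qi\,\frac{\partial}{\partial u}e^{-\sqi xu}$ (and its $\qj$-counterpart with the factor kept on the right), followed by pulling the constant units and the derivatives out of the integral. The only cosmetic difference is that the paper proves the base cases $(m,n)=(1,0)$ and $(0,1)$ and then invokes induction, whereas you substitute the iterated kernel identities directly for general $m,n$ --- a route you yourself note is equivalent.
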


\begin{proof}
The proof is done by induction. It is trivial for $m=n=0$. 
\\
For $m=1, n=0$ we calculate the QFT of $\widehat{xf}$ 
according to (\ref{eq:qft}) 
\begin{gather}
  \widehat{xf}(\vect{u}) 
  = \int_{\R^2} e^{-\sqi xu} xf(\vect{x}) \,e^{-\sqj yv} d^2\vect{x}
\nonumber \\
  = \int_{\R^2} \qi\frac{\partial}{\partial u}\,e^{-\sqi xu} 
    f(\vect{x}) \,e^{-\sqj yv} d^2\vect{x}
\nonumber \\
  = \qi\frac{\partial}{\partial u}
    \int_{\R^2} e^{-\sqi xu} f(\vect{x}) \,e^{-\sqj yv} d^2\vect{x}
  = \qi\frac{\partial}{\partial u} \hat{f}(\vect{u}).  
\label{eq:pow1}        
\end{gather}
In second equality we used 
$\frac{\partial}{\partial u}e^{-\sqi xu} = -\qi x e^{-\sqi xu}$ and $\qi (-\qi) = 1.$
\\
Completely analogous for $m=0,n=1$ we find
\be
\widehat{yf}(\vect{u}) 
  = \frac{\partial}{\partial v}
    \int_{\R^2} e^{-\sqi xu} f(\vect{x}) \,e^{-\sqj yv} d^2\vect{x} \, \qj
  = \frac{\partial}{\partial v} \hat{f}(\vect{u}) \qj \, . 
\end{equation}
Because of non-commutativity $\qj$ appears to the right of $\hat{f}$. 
Induction over
$m,n \in \N$ completes the proof.
\end{proof}

\begin{thm}[Powers of $\qi, \qj$]
\label{th:powij}
The QFT of a quaternion module function 
$ \qi^m f(\vect{x}) \qj^n$ $\in L^2(\R^2;\HQ), \,
 \, 
f \in L^2(\R^2;\HQ), \, m,n \in \N_0$
is given by
\be
  \widehat{\qi^m f \qj^n}(\vect{u}) 
  = \qi^m \hat{f}(\vect{u}) \, \qj^n.
\end{equation}
\end{thm}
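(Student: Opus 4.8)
The plan is to proceed by direct computation, as in Theorem \ref{th:powxy}, but the argument is even shorter since no differentiation of the kernel is required: everything rests on the placement of the two exponential factors together with the non-commutativity of $\HQ$. First I would insert $\qi^m f \qj^n$ into the defining QFT integral (\ref{eq:qft}), giving
\[
  \widehat{\qi^m f \qj^n}(\vect{u})
  = \int_{\R^2} e^{-\sqi xu}\, \qi^m f(\vect{x}) \qj^n\, e^{-\sqj yv}\, d^2\vect{x}.
\]

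The crucial observation is that the left kernel factor $e^{-\sqi xu} = \cos(xu) - \qi\sin(xu)$ lies in the commutative subalgebra generated by $\qi$ (isomorphic to $\C$), hence commutes with the constant $\qi^m$; symmetrically, the right kernel factor $e^{-\sqj yv}$ commutes with $\qj^n$. I would use these two commutations to slide $\qi^m$ leftward past $e^{-\sqi xu}$ and $\qj^n$ rightward past $e^{-\sqj yv}$. Since $\qi^m$ and $\qj^n$ are constant in $\vect{x}$, they then factor out of the integral, and what remains is exactly $\hat{f}(\vect{u})$ by (\ref{eq:qft}), yielding $\qi^m \hat{f}(\vect{u}) \qj^n$. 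Equivalently, because $\qi^m \in \{q_r + q_i\qi\}$ and $\qj^n \in \{q_r + q_j\qj\}$, the statement is an immediate instance of the left and right linearity rows of Table \ref{tb:QFTp1} applied in succession.

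The only point requiring care --- and in fact the reason the theorem holds at all --- is that each constant factor must be commuted across the \emph{correct} exponential: $\qi^m$ across the left kernel $e^{-\sqi xu}$ and $\qj^n$ across the right kernel $e^{-\sqj yv}$. Trying to move $\qi^m$ through the right kernel $e^{-\sqj yv}$ would be illegitimate, since $\qi\qj = \qk = -\qj\qi$. Thus the whole result hinges on the deliberate two-sided placement of the exponentials in Definition \ref{df:QFT}, which keeps the $\qi$-kernel to the left and the $\qj$-kernel to the right; respecting this asymmetry is the entire content of the proof, and unlike Theorem \ref{th:powxy} no induction on $m,n$ is needed.
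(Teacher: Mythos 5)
Your proposal is correct and follows essentially the same route as the paper's own proof, which likewise invokes the commutation relations $\exp(-\qi xu)\,\qi^m = \qi^m \exp(-\qi xu)$ and $\exp(-\qj yv)\,\qj^n = \qj^n \exp(-\qj yv)$ and notes the analogy with the left and right linearity properties of Table \ref{tb:QFTp1}. You simply spell out the sliding of the constant factors across the respective kernels in more detail than the paper does.
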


\begin{proof}
  Similar to the left and right linearities of table \ref{tb:QFTp1} 
  theorem \ref{th:powij} follows directly from the definition \ref{df:QFT} 
  of the QFT, using the commutation relationships 
\be
  \exp(-\qi xu) \qi^m = \qi^m\exp(-\qi xu) \quad
  \text{and} \quad
  \exp(-\qj yv) \qj^n = \qj^n\exp(-\qj yv).
\end{equation}
\end{proof}

For every $f \in L^2(\R^2;\HQ)$ we can always rewrite 
$f=f_r + f_i \qi + f_j \qj + f_k \qk$ 
as in (\ref{eq:flirj}) to the form
\be
  f = f_r + \qi f_i  + f_j \qj + \qi f_k \qj.
\end{equation}
Accordingly we now can make the following two important observations:
\begin{itemize}
\item 
Theorem \ref{th:powij} reduces the computation
of the QFT of any $f \in L^2(\R^2;\HQ)$ to the computation of four QFTs of the 
real functions 
$f_r, f_i, f_j, f_k \in L^2(\R^2;\R)$ as in
\be
  \hat{f} = \hat{f}_r + \qi \hat{f}_i  + \hat{f}_j \qj + \qi \hat{f}_k \qj.
\label{eq:QFTcp}
\end{equation}
 \item On the other hand theorem \ref{th:powij} reveals that every 
\textit{theorem for the 
QFT of real functions} $g \in L^2(\R^2;\R)$ immediately results via (\ref{eq:QFTcp})
in a corresponding \textit{theorem for quaternion module functions} 
$f \in L^2(\R^2;\HQ)$. We simply need to apply the theorem for the QFT of real functions
to each of the four real component functions $f_r, f_i, f_j, f_k\in L^2(\R^2;\R)$. 
This fact is rather useful, because often in image processing theorems are only
established for real image signals \cite{TB:thesis}. 
\end{itemize}

\subsection{Example: $GL(\R^2)$ transformation properties of the QFT \label{sc:exGL}}

To give an example for the second observation at the end of section \ref{sc:UsefulPr}
we use it to generalize the
general linear real non-singular transformation property of the QFT of 
real 2D functions $f \in L^2(\R^2;\R)$ of \cite{TB:thesis} to quaternion module
functions $f \in L^2(\R^2;\HQ)$. 
\\
This property of real 2D signals states that for
\be
  \vect{x}^{\prime}= {\mathcal A}
                     \vect{x} = (ax+by)\vect{e}_1+(cx+dy)\vect{e}_2
\label{eq:GLtrafo}
\end{equation}
with non-singular real transformation matrix
\be
  A = \left(
      \begin{array}{cc}
      a & b \\
      c & d
      \end{array}
      \right)
  \label{eq:Amatrix}
\end{equation}
the QFT of a \textit{real} signal $f: \R^2 \rightarrow \R$  
is\footnote{B\"{u}low \cite{TB:thesis} omits the absolute value signs for the determinant of
the transformation.}\addtocounter{footnote}{-1}
\be
  \widehat{f({\mathcal A}\vect{x})}(\vect{u})
  = \frac{| \det {\mathcal B}|}{2} \left( 
    \hat{f}({\mathcal B}_+\,\vect{u})+\hat{f}({\mathcal B}_-\,\vect{u})
    + \qi \left\{ \hat{f}({\mathcal B}_+\, \vect{u})-\hat{f}({\mathcal B}_-\, \vect{u})\right\} \qj
                      \right).
\label{eq:Atrafo}
\end{equation}
In (\ref{eq:Atrafo}) the two linear real non-singular transformations 
${\mathcal B}_+$ and ${\mathcal B}_-$ have corresponding matrices and the 
(same) determinant
\begin{gather}
  B_+ = {A^{-1}}^{T}, \quad 
  B_- = \frac{1}{\det A}
        \left(
        \begin{array}{cc}
        d & c \\
        b & a
        \end{array}
        \right), 
\nonumber \\
  \det {\mathcal B} = \det B_+ = \det B_- = (\det A )^{-1} .     
\label{eq:Btrafos}
\end{gather} 
We can now establish the generalization from $f \in L^2(\R^2;\R)$
to $f \in L^2(\R^2;\HQ)$ functions. 
\begin{thm}
\label{th:GLqft}
The QFT of a quaternion-module function $f \in L^2(\R^2;\HQ)$ with
a $GL(\R^2)$ transformation $\mathcal{A}$ of its vector arguments
(\ref{eq:GLtrafo}) is also given by (\ref{eq:Atrafo}).
\end{thm}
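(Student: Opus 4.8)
The plan is to exploit the second observation at the end of section \ref{sc:UsefulPr}: since formula (\ref{eq:Atrafo}) is already established for \emph{real} signals, it suffices to apply it componentwise to the four real parts of $f$ and then to reassemble the pieces through (\ref{eq:QFTcp}). First I would write the transformed function in the component form (\ref{eq:flirj}), observing that the substitution $\vect{x}\mapsto\mathcal{A}\vect{x}$ of (\ref{eq:GLtrafo}) acts only on the real arguments, so that
\be
  f(\mathcal{A}\vect{x}) = f_r(\mathcal{A}\vect{x}) + \qi f_i(\mathcal{A}\vect{x}) + f_j(\mathcal{A}\vect{x})\qj + \qi f_k(\mathcal{A}\vect{x})\qj,
\ee
with real component functions $f_r,f_i,f_j,f_k \in L^2(\R^2;\R)$.

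Next, by (\ref{eq:QFTcp}) (a direct consequence of theorem \ref{th:powij}) I would pull the QFT through this decomposition,
\be
  \widehat{f(\mathcal{A}\vect{x})} = \widehat{f_r(\mathcal{A}\vect{x})} + \qi\,\widehat{f_i(\mathcal{A}\vect{x})} + \widehat{f_j(\mathcal{A}\vect{x})}\,\qj + \qi\,\widehat{f_k(\mathcal{A}\vect{x})}\,\qj,
\ee
and into each of these four real QFTs I would substitute the known real-signal transformation law (\ref{eq:Atrafo}). Because the transformations $\mathcal{B}_\pm$, their common determinant, and the prefactor $|\det\mathcal{B}|/2$ of (\ref{eq:Btrafos}) are identical for every component, the scalar prefactor factors out of the whole expression, and what remains to be shown is that the leftover sum reproduces (\ref{eq:Atrafo}) written for the full quaternion QFT $\hat{f}$, with $\hat{f}(\mathcal{B}_\pm\vect{u})$ itself expanded through (\ref{eq:QFTcp}).

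The main obstacle --- and really the only nontrivial step --- is the non-commutative bookkeeping of this recombination. Each component $f_s$ contributes a block $\widehat{f_s}(\mathcal{B}_+\vect{u})+\widehat{f_s}(\mathcal{B}_-\vect{u})+\qi\{\widehat{f_s}(\mathcal{B}_+\vect{u})-\widehat{f_s}(\mathcal{B}_-\vect{u})\}\qj$, multiplied on the left by $\qi$ for the $f_i,f_k$ terms and on the right by $\qj$ for the $f_j,f_k$ terms. I would distribute these outer factors, contract the resulting adjacent pairs $\qi\qi$ and $\qj\qj$ via $\qi^2=\qj^2=-1$, and collect. The decisive point is that the outer left-$\qi$/right-$\qj$ pattern carried by (\ref{eq:QFTcp}) is exactly the pattern already sitting inside the $\qi\{\cdots\}\qj$ structure of (\ref{eq:Atrafo}); hence the terms produced this way match one-for-one the terms obtained by expanding $\hat{f}(\mathcal{B}_+\vect{u})+\hat{f}(\mathcal{B}_-\vect{u})+\qi\{\hat{f}(\mathcal{B}_+\vect{u})-\hat{f}(\mathcal{B}_-\vect{u})\}\qj$ through (\ref{eq:QFTcp}). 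Since this is a purely algebraic identity in the eight quaternions $\widehat{f_s}(\mathcal{B}_\pm\vect{u})$, no further property of the real-signal QFT is needed, and (\ref{eq:Atrafo}) follows verbatim for $f\in L^2(\R^2;\HQ)$.
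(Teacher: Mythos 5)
Your proposal is correct and follows essentially the same route as the paper's own (sketched) proof: apply the real-signal law (\ref{eq:Atrafo}) to each of the four real components in (\ref{eq:QFTcp}), then rearrange the resulting 16 terms while keeping every $\qi$ on the left and every $\qj$ on the right. You in fact supply slightly more detail than the paper on the decisive recombination step (contracting the adjacent $\qi\qi$ and $\qj\qj$ pairs), and that bookkeeping checks out.
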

\begin{proof}
We only sketch the proof, because writing out all expressions explicitly
would consume too much space:
\begin{itemize}
  \item Applying (\ref{eq:Atrafo}) and (\ref{eq:Btrafos}) to each component of 
(\ref{eq:QFTcp}) and 
  \item rearranging the sum (of 16 terms) yields the validity of 
(\ref{eq:Atrafo}) together with (\ref{eq:Btrafos}) also for quaternion-valued 
$f \in L^2(\R^2;\HQ)$. 
  \item It is again crucial that in each 
term all factors $\qi$ are always kept to the left and all factors $\qj$ 
are always kept to the right.
\end{itemize}
\end{proof}

We remark that resorting to matrices and matrix manipulations is geometrically 
not very intuitive, so in section \ref{sc:GLprop} an alternative more geometric 
approach is taken to derive the transformation properties of general
$f \in L^2(\R^2;\HQ)$. This geometric approach has far reaching consequences
for the generalization of the QFT, exploited in later sections. 

But before geometrically reanalyzing QFT transformation properties we look
at the following variant of the QFT with some desirable properties not valid 
for the QFT of definition \ref{df:QFT}.

\section{The right side quaternion Fourier transform (QFTr) \label{sc:qftr}}

We observe that it is not possible to establish a general Plancherel theorem 
for the QFT of the inner product $(f,g)$ of (\ref{eq:intip}), 
because the product (\ref{eq:intip}) lacks the cyclic
symmetry (\ref{eq:cycs}) applied in the proof of theorem \ref{th:plan}. 
To obtain a 
Plancherel theorem it is therefore either necessary to modify the symmetry 
properties of 
the inner product as in (\ref{eq:intsp}) or to modify the QFT itself. In this
section we explore the second possibility.
\begin{defn}[Right side QFT (QFTr)]
The right side quaternion Fourier transform $\qftr{f}: \R^2 \rightarrow \HQ $ of 
$ f \in L^2(\R^2;\HQ) $, 
$\vect{x}=x\vect{e}_1+y\vect{e}_2 \in \R^2 $, and
$\vect{u}=u\vect{e}_1+v\vect{e}_2 \in \R^2 $ is defined as
\be
  \qftr{f}(\vect{u}) = \int_{\R^2} f(\vect{x}) \,e^{-\sqi xu} e^{-\sqj yv} d^2\vect{x}
  \quad \quad
  \text{with} \quad d^2\vect{x} = dx dy. 
\label{eq:qftr}
\end{equation}
\end{defn}
The QFTr is known as Clifford Fourier transform~\cite{BDS:CA, GS:ncHCFT}, 
because of the isomorphism $\HQ \cong Cl_{0,2}$. Further freedoms in alternative 
definitions would be to exchange the order of the exponentials in (\ref{eq:qftr})
or to wholly shift both exponential factors to the left side instead. 
The former
would simply exchange the roles of $\qi$ and $\qj$, but the latter would not serve
our purpose as will soon become clear.
The QFTr can be inverted~\cite{BDS:CA, GS:ncHCFT} using
\be
  f(\vect{x}) 
  = \frac{1}{(2\pi)^2}\int_{\R^2} \qftr{f}(\vect{u}) 
    \,e^{\sqj yv}e^{\sqi xu} d^2\vect{u} ,
\label{eq:iqftr}
\end{equation}
with $d^2\vect{u} = du dv$. Attention needs to be paid to the reversed 
order of the exponential factors in (\ref{eq:iqftr}) compared to (\ref{eq:qftr}).

\subsection{Properties of the QFTr}

For general $f,g \in L^2(\R^2;\HQ)$ \textit{left linearity}
and \textit{dilation} properties of table \ref{tb:QFTp1} hold. The
left linearity coefficients can now be fully quaternionic constants
$\alpha^{\prime}, \beta^{\prime} \in \HQ$.

\begin{table}
\caption{Properties of the right sided quaternion Fourier transform (QFTr) 
\label{tb:QFTr}
of quaternion functions
(Quat. Funct.) $f,g \in L^2(\R^2;\HQ)$, 
with $\vect{x}, \vect{u} \in \R^2$,
constants
$\alpha, \beta \in \R$, 
$\alpha^{\,\prime}, \beta^{\prime} \in \HQ$,
$a,b \in \R\setminus\{0\}$,  
$\vect{x}_0=x_0 \vect{e}_1+y_0 \vect{e}_2, 
\vect{u}_0=u_0 \vect{e}_1+v_0 \vect{e}_2 \in \R^2$ and
$m,n \in \N$. }
\begin{center}
\begin{tabular}{llll} 
\hline
\textbf{Property}         &  \textbf{Quat. Funct.}      &   \textbf{QFTr} 
\\ 
\hline
Linearity\addtocounter{footnote}{1}\footnotemark      & $\alpha f(\vect{x})$+$\beta \;g(\vect{x})$     &  
$\alpha \qftr{f}(\vect{u})$+ $\beta \qftr{g}(\vect{u})$ 
\\ 
Left linearity     & $ \alpha^{\,\prime}f(\vect{x})$+$\;\beta^{\prime}g(\vect{x}) $  &  
$\alpha^{\,\prime}\qftr{f}(\vect{u})$+ $\beta^{\prime}\qftr{g}(\vect{u})$ 
\\ 
$\vect{x}$-Shift\footnotemark    
  &  $f(\vect{x}-\vect{x}_0)$   
  &  $\mathcal{F}_{\triangleright} \{f e^{-\sqi x_0 u}\} (\vect{u}) \, e^{-\sqj y_0 v}$  
\\
Dilation
  & $f(a \,x \vect{e}_1 + b \,y \vect{e}_2)$ 
  & $\frac{1}{|ab|}\qftr{f}(\frac{u}{a}\vect{e}_1 + \frac{v}{b}\vect{e}_2)$
\\
Part. deriv.\footnotemark 
  & $ \frac{\partial^{m+n}}{\partial x^m \partial y^n}f(\vect{x}) \qi^{-m} $  
  & $ u^m \qftr{f}(\vect{u}) (\qj v)^n  $ 
\\
Powers\addtocounter{footnote}{0}\footnotemark of $x,y$ 
  & $ x^m y^n f(\vect{x}) \qi^{-m} $ 
  & $ \frac{\partial^{m+n}}{\partial u^m \partial v^n} \qftr{f}(\vect{u}) \, \qj^n $
\\
Powers\addtocounter{footnote}{0}\footnotemark
  of $\qi,\qj$ 
  & $ \qi^m \qj^n f(\vect{x}) $ 
  & $ \qi^m \qj^n \qftr{f}(\vect{u})  $
\\
Plancherel\footnotemark 
  & $ ( f,g ) =$ 
  & $ \frac{1}{(2 \pi)^2} ( \qftr{f},\qftr{g} ) $ 
  \\
Plancherel\footnotemark 
  & $ \langle f,g \rangle =$ 
  & $ \frac{1}{(2 \pi)^2} \langle \qftr{f},\qftr{g} \rangle $ 
\\
Parseval
  & $ \| f \| =$
  & $ \frac{1}{2\pi} \| \qftr{f} \| $
\end{tabular}
\end{center}
\end{table}
\addtocounter{footnote}{-6}
\footnotetext{The positions of the real scalars $\alpha, \beta$ before or
after the functions $f, g$ do not matter.}
\addtocounter{footnote}{1}
\footnotetext{%
Only for
quaternion module functions $f\in L^2(\R^2;\HQ)$ with $\qi f = f \qi$, 
i.e. $f=f_r+\qi f_i  \text{ with } f_r,f_i \in L^2(\R^2;\R)$ do 
we get
$\mathcal{F}_{\triangleright}\{f(\vect{x}-\vect{x}_0)\}(\vect{u})  
= e^{-\sqi x_0 u} \qftr{f}(\vect{u}) \, e^{-\sqj y_0 v}$.  
}
\addtocounter{footnote}{1}
\footnotetext{%
Only for $\qi f = f \qi$ do we get
$ \mathcal{F}_{\triangleright}\{\frac{\partial^{m+n}}{\partial x^m \partial y^n}f\}(\vect{u})
= (\qi u)^m \qftr{f}(\vect{u}) (\qj v)^n $.
}
\addtocounter{footnote}{1}
\footnotetext{%
Only for $\qi f = f \qi$ do we get
$ \mathcal{F}_{\triangleright}\{x^m y^n f\}(\vect{u}) 
= \qi^m \frac{\partial^{m+n}}{\partial u^m \partial v^n} \qftr{f}(\vect{u}) \, \qj^n $.
}
\addtocounter{footnote}{1}
\footnotetext{Here the powers of $\qi$, $\qj$ law
is a direct consequence of the left linearity.}
\addtocounter{footnote}{1}
\footnotetext{Compare theorem \ref{th:qplan}.}
\addtocounter{footnote}{1}
\footnotetext{A direct consequence of symmetrizing theorem \ref{th:qplan}.}

But $\vect{x}$-shift, partial derivative,
and powers of $x^m y^n$ properties 
need to be modified as in table \ref{tb:QFTr}.
Regarding (\ref{eq:quat}) it is clear that 
$\qi f = f \qi$ holds iff $f=f_r + f_i \,\qi, \,\, f_r, f_i\in \R$, which is slightly more
general than the restriction of \cite{TB:thesis} to $f=f_r \in \R$. 
A modulation property analogous to the one in table \ref{tb:QFTp1} does not
 hold. It is obstructed by the non-commutativity of the exponential factors
\be
  \exp(\qj yv_0) \, \exp(\qi xu) \neq \exp(\qi xu) \, \exp(\qj yv_0).
\end{equation}
For a powers of $\qi,\qj$ property to hold for the QFTr, we need to 
shift the factors $\qj^n$ also to the left of the quaternion function $f(\vect{x})$.

For fully general quaternion-valued $f,g \in L^2(\R^2;\HQ)$ we can establish for the QFTr the following
quaternion-valued Plancherel theorem based on the inner product (\ref{eq:intip}).

\begin{thm}[QFTr Plancherel]
\label{th:qplan}
The (quaternion-valued) inner product (\ref{eq:intip}) 
of two quaternion module functions $ f,g \in L^2(\R^2;\HQ) $ is given by the 
inner product of the corresponding QFTrs $\qftr{f}$ and $\qftr{g}$
\be
  ( f,g ) = \frac{1}{(2 \pi)^2} ( \qftr{f},\qftr{g} ).
\label{eq:qplan}  
\end{equation}
\end{thm}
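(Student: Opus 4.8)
The plan is to mimic the proof of Theorem~\ref{th:plan}, but to replace the cyclic-symmetry step --- which is unavailable here, since we keep the full quaternion-valued product rather than only its scalar part --- by the anti-automorphism property of the quaternion conjugate. First I would start from the inner product (\ref{eq:intip}), $(f,g)=\int_{\R^2} f(\vect{x})\,\tilde{g}(\vect{x})\,d^2\vect{x}$, and substitute for $f(\vect{x})$ its inverse QFTr (\ref{eq:iqftr}). This produces a double integral in which the factor $\qftr{f}(\vect{u})$ already sits to the far left and carries no $\vect{x}$-dependence.

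Next I would invoke Fubini to exchange the order of integration and pull $\qftr{f}(\vect{u})$ out of the $\vect{x}$-integral on the left, leaving
\[
(f,g)=\frac{1}{(2\pi)^2}\int_{\R^2}\qftr{f}(\vect{u})\Big[\int_{\R^2} e^{\sqj yv}\,e^{\sqi xu}\,\tilde{g}(\vect{x})\,d^2\vect{x}\Big]\,d^2\vect{u}.
\]
The heart of the argument is to recognize the bracketed inner $\vect{x}$-integral as $\tilde{\qftr{g}}(\vect{u})$. I would compute the conjugate of the QFTr (\ref{eq:qftr}) directly: since the conjugate is real-linear it passes inside the integral, since it reverses products one has $\widetilde{g\,e^{-\sqi xu}e^{-\sqj yv}}=\widetilde{e^{-\sqj yv}}\,\widetilde{e^{-\sqi xu}}\,\tilde{g}$, and since conjugation flips the signs of $\qi$ and $\qj$ one has $\widetilde{e^{-\sqi xu}}=e^{\sqi xu}$ and $\widetilde{e^{-\sqj yv}}=e^{\sqj yv}$. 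Assembling these matches the bracketed expression exactly.

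Substituting $\tilde{\qftr{g}}(\vect{u})$ back and reading off the definition (\ref{eq:intip}) of the inner product of the transforms then gives $(f,g)=\frac{1}{(2\pi)^2}(\qftr{f},\qftr{g})$, completing the proof.

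The delicate point is entirely bookkeeping of the non-commutative factors: there is no scalar part to extract and no cyclic symmetry to exploit, so the whole identity must ride on the reversal rule of the conjugate together with the \emph{right-sided} placement of the exponentials in the QFTr. That placement is precisely what keeps $\qftr{f}(\vect{u})$ to the far left (so it factors cleanly out of the $\vect{x}$-integral) and simultaneously leaves the exponentials in the order $e^{\sqj yv}e^{\sqi xu}$ that reverses exactly into $\tilde{\qftr{g}}$. Had the exponentials instead been shifted wholly to the left of $f$, the conjugate would no longer reassemble into a QFTr --- which is exactly the reason flagged earlier in the text that such a variant ``would not serve our purpose.''
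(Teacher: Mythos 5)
Your proposal is correct and follows essentially the same route as the paper's own proof: substitute the inverse QFTr for $f$, exchange the order of integration to pull $\qftr{f}(\vect{u})$ to the left, and identify the remaining $\vect{x}$-integral as $\tilde{\qftr{g}}(\vect{u})$ via the product-reversing property of quaternion conjugation. Your closing remark on why the right-sided placement of the exponentials is essential matches the paper's own motivation for introducing the QFTr.
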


\begin{proof}
For $ f,g \in L^2(\R^2;\HQ) $ we calculate the inner product (\ref{eq:intip})
\begin{gather}
  ( f,g ) 
  = \int_{\R^2} f(\vect{x})\tilde{g}(\vect{x}) d^2\vect{x}
\nonumber \\  
  = \frac{1}{(2\pi)^2}\int_{\R^2}  
    \int_{\R^2}\qftr{f}(\vect{u})e^{\sqj v y} e^{\sqi u x} d^2\vect{u} \,
    \tilde{g}(\vect{x})  d^2\vect{x}
\nonumber \\  
  = \frac{1}{(2\pi)^2}\int_{\R^2} 
    \qftr{f}(\vect{u})
    \int_{\R^2}e^{\sqj v y} e^{\sqi u x} \tilde{g}(\vect{x})d^2\vect{x}
     d^2\vect{u}
\nonumber \\  
  = \frac{1}{(2\pi)^2}\int_{\R^2} 
    \qftr{f}(\vect{u})
    [\int_{\R^2} g(\vect{x})e^{-\sqi u x}e^{-\sqj v y}d^2\vect{x}]^{\sim}\,
     d^2\vect{u}
\nonumber \\  
  = \frac{1}{(2\pi)^2}\int_{\R^2} 
    \qftr{f}(\vect{u})
    \tilde{\qftr{g}}{(\vect{u})}
     d^2\vect{u}
  = \frac{1}{(2 \pi)^2} ( \qftr{f},\qftr{g} ). 
\label{eq:qplanp}
\end{gather}
In the second equality of (\ref{eq:qplanp}) we replaced $f$ with its inverse
QFTr expression (\ref{eq:iqftr}). In the third equality we exchanged the order of
integration. For the fourth equality we simply pulled the reversion outside the 
square brackets $[\ldots]$ and obtained the QFTr $\qftr{g}(\vect{u})$, 
which proves (\ref{eq:qplan}) according to (\ref{eq:intip}).
\end{proof}
For $g=f$ theorem \ref{th:qplan} has a corresponding QFTr Parseval theorem as a 
direct corollary. 
\begin{cor}[QFTr Parseval]
\label{cr:qpars}
The $L^2(\R^2;\HQ)$-norm of a quaternion module function $f\in L^2(\R^2;\HQ)$ is given
by the $L^2(\R^2;\HQ)$-norm of its QFTr $\qftr{f}$ multiplied by $1/(2\pi)$
\be
  \|f\| = \frac{1}{2\pi}\|\qftr{f}\|=\frac{1}{2\pi}\|\hat{f}\|.
\label{eq:qpars}
\end{equation}
\end{cor}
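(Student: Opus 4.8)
The plan is to read the result off directly from the QFTr Plancherel theorem \ref{th:qplan} by specializing to $g=f$, and then to reconcile the resulting norm identity with the already-established QFT Parseval corollary \ref{cr:pars}. The claim is genuinely a corollary, so the proof is short; the only thing demanding attention is the passage to square roots in a quaternion-valued setting.

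First I would set $g=f$ in the QFTr Plancherel identity (\ref{eq:qplan}), obtaining
\[
  (f,f) = \frac{1}{(2\pi)^2}\,(\qftr{f},\qftr{f}).
\]
The key preliminary observation is that both inner products here are in fact \emph{real and non-negative}. Indeed, by the definition (\ref{eq:intip}),
\[
  (f,f)=\int_{\R^2} f(\vect{x})\,\tilde{f}(\vect{x})\,d^2\vect{x}
        =\int_{\R^2} |f(\vect{x})|^2\,d^2\vect{x}\ \ge 0,
\]
since $f\tilde{f}=|f|^2$ is a \emph{real} scalar, and likewise $(\qftr{f},\qftr{f})=\int_{\R^2}|\qftr{f}(\vect{u})|^2\,d^2\vect{u}\ge 0$. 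This reality is exactly what legitimizes the next step and rules out any sign or ordering ambiguity that the non-commutativity of $\HQ$ might otherwise threaten.

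Next I would take the (real, non-negative) square root of both sides. Using the norm definition $\|f\|=\sqrt{(f,f)}$ together with $\|\qftr{f}\|=\sqrt{(\qftr{f},\qftr{f})}$, this immediately gives the first equality of (\ref{eq:qpars}),
\[
  \|f\| = \frac{1}{2\pi}\,\|\qftr{f}\|.
\]
For the second equality I would simply compare with the QFT Parseval corollary \ref{cr:pars}, which asserts $\|f\|=\frac{1}{2\pi}\|\hat{f}\|$. Since both $\frac{1}{2\pi}\|\qftr{f}\|$ and $\frac{1}{2\pi}\|\hat{f}\|$ coincide with the single quantity $\|f\|$, they agree with each other, yielding $\frac{1}{2\pi}\|\qftr{f}\|=\frac{1}{2\pi}\|\hat{f}\|$ and completing (\ref{eq:qpars}).

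There is no substantive obstacle in this argument, because the whole content is supplied by theorem \ref{th:qplan} and corollary \ref{cr:pars}. The one point I would be careful to state explicitly is the reality and non-negativity of the diagonal inner products $(f,f)$ and $(\qftr{f},\qftr{f})$; this guarantees that extracting the square root is well defined and commutes with the scalar factor $1/(2\pi)^2$, which is precisely the step that turns the quadratic Plancherel relation into the linear Parseval norm identity.
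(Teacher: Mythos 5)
Your proposal is correct and follows essentially the same route as the paper: set $g=f$ in the QFTr Plancherel theorem \ref{th:qplan} to obtain the first identity, then compare with the QFT Parseval corollary \ref{cr:pars} for the second. Your added remark that $(f,f)=\int_{\R^2}|f|^2\,d^2\vect{x}$ is real and non-negative, which justifies extracting the square root, is a worthwhile explicit detail that the paper leaves implicit.
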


\begin{proof}
The first identity follows from setting $g=f$ in theorem \ref{th:qplan} (QFTr Plancherel). The second
identity follows from comparing with corollary \ref{cr:pars} (QFT Parseval).
\end{proof}

To facilitate the use of the QFTr and comparison with the QFT (table \ref{tb:QFTp1}) 
we list the main QFTr properties in table \ref{tb:QFTr}.

\section{Understanding the $GL(\R^2)$ transformation 
         properties of the QFT \label{sc:GLprop}}

We begin with noting that the matrix transformation law (\ref{eq:Atrafo}),
derived by B\"{u}low \cite{TB:thesis} for real signals
$f \in L^2(\R^2;\R)$, and generalized in theorem \ref{th:GLqft}
of section \ref{sc:exGL}
  to quaternion-valued signals\footnote{%
Remember that B\"{u}low \cite{TB:thesis} proved his transformation law only
for real signals. But in theorem \ref{th:GLqft} of section \ref{sc:exGL},  
we used (\ref{eq:QFTcp}) and theorem \ref{th:powij}
to generalize from real signals $f\in L^2(\R^2,\R)$ to quaternion
valued signals $f\in L^2(\R^2,\HQ)$.
} 
$f \in L^2(\R^2;\HQ)$, with four terms on the right side, allows no 
straightforward
geometric interpretation. Yet a clear geometric interpretation is not only needed
in many applications, such an interpretation is also very instructive in order to successfully 
generalize the QFT to higher dimensions. 

Toward this aim we observe, that the split (\ref{eq:kpm}) of the exponentials $K$ under
the QFT integral
results in two (single exponential) complex kernels $K_{\pm}$ with complex 
units $\qi$ (or $\qj$) apart from the right (or left) factor $(1\pm \qk)/2$. 

This and the known elegant monomial transformation
properties of complex Fourier transforms (also preserved in the 
Clifford FT of \cite{HM:CFTUP})
motivates us to geometrically re-analyze the $GL(\R^2)$ transformation properties 
of the QFT of $f \in L^2(\R^2;\HQ)$ in terms of its two components $f_{\pm}$
as given in (\ref{eq:fpm}).

\begin{thm}[QFT of $f_{\pm}$]
\label{th:fpmtrafo}
The QFT of the $f_{\pm}$ split parts of a quaternion module function 
$f \in L^2(\R^2,\HQ)$ have the complex forms
\be
 \hat{f}_{\pm} 
  \stackrel{}{=} \int_{\R^2}
    f_{\pm}e^{-\sqj (yv \mp xu)}d^2x
  \stackrel{}{=} \int_{\R^2}
    e^{-\sqi (xu \mp yv)}f_{\pm}d^2x \,\, .
\end{equation}
\end{thm}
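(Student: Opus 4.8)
The plan is to reduce everything to a single commutation relation satisfied by the split parts $f_\pm$ and then to slide either the left or the right exponential in the QFT integrand across $f_\pm$. Applying the QFT definition (\ref{eq:qft}) to $f_\pm$ gives $\hat{f}_\pm = \int_{\R^2} e^{-\sqi xu} f_\pm e^{-\sqj yv}\,d^2x$, so the task is to collapse this double-exponential integrand into a single ($\qi$- or $\qj$-) complex exponential. Starting from the definition $f_\pm = \frac{1}{2}(f \pm \qi f \qj)$ in (\ref{eq:fpm}), I would first record the characteristic property $\qi f_\pm \qj = \pm f_\pm$, which follows by expanding $\qi \cdot \frac{1}{2}(f \pm \qi f \qj)\cdot \qj$ and using $\qi^2 = \qj^2 = -1$ from (\ref{eq:quat}). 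Multiplying on the right by $\qj^{-1} = -\qj$ converts this into the key relation $\qi f_\pm = \mp f_\pm \qj$, equivalently $f_\pm \qj = \mp \qi f_\pm$. This single identity is the engine of the whole proof.

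For the first (right-exponential) form I would expand the left kernel by Euler's formula, $e^{-\sqi xu} = \cos(xu) - \qi \sin(xu)$, and push $\qi$ through $f_\pm$ using $\qi f_\pm = \mp f_\pm \qj$. The two terms then reassemble into $f_\pm(\cos(xu) \pm \qj \sin(xu)) = f_\pm e^{\pm \sqj xu}$, so that $e^{-\sqi xu} f_\pm e^{-\sqj yv} = f_\pm e^{\pm \sqj xu} e^{-\sqj yv}$. Since the two remaining exponentials are built from the single imaginary unit $\qj$, they commute and combine into $f_\pm e^{-\sqj(yv \mp xu)}$; integrating over $\R^2$ yields the first asserted expression. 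For the second (left-exponential) form I would run the mirror-image argument: expand the right kernel $e^{-\sqj yv} = \cos(yv) - \qj \sin(yv)$ and use $f_\pm \qj = \mp \qi f_\pm$ to obtain $f_\pm e^{-\sqj yv} = e^{\pm \sqi yv} f_\pm$, whence $e^{-\sqi xu} f_\pm e^{-\sqj yv} = e^{-\sqi xu} e^{\pm \sqi yv} f_\pm = e^{-\sqi(xu \mp yv)} f_\pm$ after combining the two $\qi$-exponentials.

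There is no deep analytic obstacle, since every manipulation is algebraic and carried out pointwise inside an absolutely convergent integral, so no interchange of integration is needed. The only place demanding genuine care is the sign bookkeeping: one must track that the sign $\mp$ in the commutation relation is reversed relative to the subscript of $f_\pm$, and that fusing $e^{-\sqi xu}$ with $e^{\pm \sqi yv}$ produces the opposite sign inside the argument $xu \mp yv$. As a consistency check I would match the result against the kernel split (\ref{eq:kpm}), whose two single-exponential pieces $K_\pm$ carry exactly the idempotent factor $(1 \pm \qk)/2$ that appears in $f_\pm$ through (\ref{eq:fpmcomp}); this confirms that it is precisely the $(1\pm\qk)/2$-structure of $f_\pm$ that collapses the double-exponential kernel to a complex one.
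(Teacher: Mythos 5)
Your proof is correct, and it takes a genuinely different (and arguably cleaner) route than the paper. The paper substitutes the explicit component form (\ref{eq:fpmcomp}), $f_{\pm} = \{f_r\pm f_k + \qi(f_i\mp f_j)\}\frac{1\pm \sqk}{2}$, commutes the $\qi$-complex prefactor past $e^{-\sqi xu}$, and then recognizes the remaining kernel $e^{-\sqi xu}\frac{1\pm\sqk}{2}e^{-\sqj yv}$ as $K_\pm$ from the kernel split (\ref{eq:kpm}); the Euler-formula work is thus localized in the idempotent identity $e^{-\sqi xu}\frac{1\pm\sqk}{2} = \frac{1\pm\sqk}{2}e^{\pm\sqj xu}$. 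You instead extract a single intrinsic commutation relation $\qi f_\pm = \mp f_\pm \qj$ directly from the definition (\ref{eq:fpm}) via $\qi f_\pm \qj = \pm f_\pm$, and slide the exponentials across $f_\pm$ itself, never descending to the real components $f_r, f_i, f_j, f_k$ or to the idempotent $(1\pm\sqk)/2$ (your relation is of course the abstract form of the idempotent's property $\qi\frac{1\pm\sqk}{2} = \mp\frac{1\pm\sqk}{2}\qj$). All your sign bookkeeping checks out: $\qi f_\pm \qj = \frac{1}{2}(\qi f\qj \pm \qi^2 f \qj^2) = \pm f_\pm$, hence $e^{-\sqi xu}f_\pm = f_\pm e^{\pm\sqj xu}$ and $f_\pm e^{-\sqj yv} = e^{\pm\sqi yv}f_\pm$, which fuse to the two claimed single-exponential forms. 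What your version buys is coordinate independence: the same argument transfers verbatim to the volume-time and spacetime splits $f_\pm = \frac{1}{2}(f \pm \vect{e}_0 f i_3)$ of section \ref{sc:gen}, where writing out an analogue of (\ref{eq:fpmcomp}) would be considerably more cumbersome; what the paper's version buys is the explicit link to the kernel decomposition $K = K_+ + K_-$, which it reuses for the geometric $GL(\R^2)$ analysis.
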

\begin{proof}
\begin{gather}
  \hat{f}_{\pm} 
  = \int_{\R^2}e^{-\sqi xu}
    \{f_r\pm f_k + \qi(f_i\mp f_j)\}\frac{1\pm \qk}{2}e^{-\sqj yv}d^2x
\nonumber \\
  = \int_{\R^2}
    \{f_r\pm f_k + \qi(f_i\mp f_j)\}\,e^{-\sqi xu}\frac{1\pm \qk}{2}e^{-\sqj yv}d^2x
\nonumber \\
  \stackrel{}{=} \int_{\R^2}
    \{f_r\pm f_k + \qi(f_i\mp f_j)\} \,
    \underbrace{\frac{1\pm \qk}{2}e^{-\sqj (yv \mp xu)}}_{=K_{\pm}}\,\,d^2x
\nonumber \\
  \stackrel{(\ref{eq:fpmcomp})}{=} \int_{\R^2}
    f_{\pm}e^{-\sqj (yv \mp xu)}d^2x
  \stackrel{}{=} \int_{\R^2}
    e^{-\sqi (xu \mp yv)}f_{\pm}d^2x \,\, ,
  \label{eq:QFTfpm}
\end{gather}
where for the third equality we did a number of quaternion algebra manipulations,
involving Euler's formula and trigonometric addition theorems. 
The last equality of (\ref{eq:QFTfpm}) follows analogously by replacing
$f_{\pm}$ with the third expression in (\ref{eq:fpmcomp}), etc.
\end{proof}
We learn from the third line of (\ref{eq:QFTfpm}) that the behavior of 
the two parts (\ref{eq:kpm}) under automorphisms ${\mathcal A}\in GL(\R^2)$ 
also determines the automorphism properties of the QFTs $\hat{f}_{\pm}$, where 
due to theorem \ref{th:powij} the QFT operation and the split operation 
(\ref{eq:fpm}) commute.

\subsection{Geometric interpretation and coordinate independent formulation
            of $GL(\R^2)$ transformations of the QFT}

We begin with noting that according to the \textit{polar decomposition theorem} 
\cite{DH:NF1} 
every automorphism ${\mathcal A}\in GL(\R^2)$ has a unique decomposition 
${\mathcal A}={\mathcal T}{\mathcal R} ={\mathcal R}{\mathcal S}$, where ${\mathcal R}$ is a rotation and
${\mathcal T}$ and ${\mathcal S}$ are symmetric with positive and negative eigenvalues. 

Positive eigenvalues correspond to stretches by the eigenvalue 
in the direction of the eigenvector. Negative eigenvalues correspond to 
reflections at the line (hyperplane) normal to the eigenvector, 
composed with stretches by the absolute value of the eigenvalue
 in the direction of the eigenvector.

Stretches (positive eigenvalues) ${\mathcal D} \in GL(\R^2)$ were already fully 
treated in \cite{TB:thesis} (compare also table \ref{tb:QFTp1}).

Rotations correspond to two reflections \cite{CM:DiscGr, EC:GrSimp}
at lines subtending half the 
angle of the resulting rotation 
${\mathcal R}_{\vect{a}\vect{b}}={\mathcal U}_{\vect{a}}{\mathcal U}_{\vect{b}}$. 
The elementary transformations that compose all
automorphisms ${\mathcal A}\in GL(\R^2)$ are therefore stretches and reflections.

In geometric algebra reflections ${\mathcal U}_{\vect{n}}$ at a hyperplane 
(line in 2D) through the origin can be characterized 
by normal vectors $\vect{n}$
\be
  {\mathcal U}_{\vect{n}}\vect{x} = -\vect{n}^{-1}\vect{x}\vect{n}. 
  \label{eq:garef}
\end{equation}
The length of $\vect{n}$ does not matter. ${\mathcal U}_{\vect{n}}$ preserves (reverses)
the component parallel (perpendicular) to the hyperplane of reflection.

With the vectors $\vect{x}=x\vect{e}_1+y\vect{e}_2$ , $\vect{u}=u\vect{e}_1+v\vect{e}_2$
we now rewrite coordinate free\footnote{%
The fact that the reflection ${\mathcal U}_{\vect{e}_1}$
with the special hyperplane normal to vector $\vect{e}_1$
is needed stems from the arbitrary initial association
of the $\vect{e}_1$-coordinate product $xu$ with $\qi$ 
and of the $\vect{e}_2$-coordinate product $yv$ with $\qj$. 
}
 the \textit{angles in the exponentials} 
of $\hat{f}_{\pm}$ as
\be
  -xu+yv = \vect{x} \cdot ({\mathcal U}_{\vect{e}_1}\vect{u}), \quad 
  xu+yv = \vect{x} \cdot \vect{u}.
\end{equation}
Hence we get for the QFTs of $f_\pm$
\be
 \hat{f}_{+} 
  \stackrel{}{=} \int_{\R^2}
    f_{+}e^{-\sqj \, \vect{x} \cdot ({\mathcal U}_{\vect{e}_1}\vect{u})}d^2x,
    \quad
  \hat{f}_{-} 
  \stackrel{}{=} \int_{\R^2}
    f_{-}e^{-\sqj \, \vect{x} \cdot \vect{u}}d^2x.
\end{equation}
The QFT of $f_-$ is analogous to a complex 2D Fourier transform,
only in general $f_-$ and the exponential factor do not commute. 
The QFT of $f_+$ is similar except for the reflection ${\mathcal U}_{\vect{e}_1}$.

We are now in a position to apply any automorphism ${\mathcal A}\in GL(\R^2)$ to the spatial
argument of the $f_{\pm}$ components of any $f \in L^2(\R^2,\HQ)$. We begin with
\begin{gather}
{ \widehat{f_-({\mathcal A}\vect{x})}(\vect{u}) }
  = \int_{\R^2} f_-({\mathcal A}\vect{x}) e^{-\sqj \, \vect{x} \cdot \vect{u}}d^2x
\nonumber \\
  \stackrel{\vect{z} = {\mathcal A}\vect{x}}{=}
  \int_{\R^2} f_-(\vect{z}) 
  e^{-\sqj ({\mathcal A}^{-1}\vect{z}) \cdot \vect{u}}\,|\det {\mathcal A}^{-1}|d^2z
\nonumber \\
  =  |\det {\mathcal A}^{-1}| \int_{\R^2} f_-(\vect{z}) 
  e^{-\sqj \, \vect{z} \cdot (\overline{{\mathcal A}^{-1}}\vect{u})}\,
  d^2z
\nonumber \\
  =  
|\det {\mathcal A}^{-1}| \, \hat{f}_-(\overline{{\mathcal A}^{-1}}\vect{u}),
\label{eq:fmtrafo}
\end{gather}
where $\overline{{\mathcal A}^{-1}}$ indicates the adjoint automorphism of
${\mathcal A}^{-1}$. The absolute value of the determinant $\det {\mathcal A}^{-1}$ needs
to be used, because of the interchange of integration boundaries for a negative 
determinant. 
We continue with
\begin{gather}
\widehat{f_+({\mathcal A}\vect{x})}(\vect{u})
  = \int_{\R^2} f_+({\mathcal A}\vect{x}) 
    e^{-\sqj \, \vect{x} \cdot ({\mathcal U}_{\vect{e}_1}\vect{u})}d^2x
\nonumber \\
  \stackrel{\vect{z} = {\mathcal A}\vect{x}}{=}
     \int_{\R^2} f_+(\vect{z}) 
     e^{-\sqj ({\mathcal A}^{-1}\vect{z}) \cdot ({\mathcal U}_{\vect{e}_1}\vect{u})}\,
     |\det {\mathcal A}^{-1}|d^2z
\nonumber \\
  = |\det {\mathcal A}^{-1}| \int_{\R^2} f_+(\vect{z}) 
    e^{-\sqj \, \vect{z} \cdot (\overline{{\mathcal A}^{-1}}{\mathcal U}_{\vect{e}_1}\vect{u})}\,
    d^2z
\nonumber \\
  = |\det {\mathcal A}^{-1}| \int_{\R^2} f_+(\vect{z}) 
    e^{-\sqj \, \vect{z} \cdot 
    ({\mathcal U}_{\vect{e}_1}{\mathcal U}_{\vect{e}_1}\overline{{\mathcal A}^{-1}}
    {\mathcal U}_{\vect{e}_1}\vect{u})}\,
    d^2z
\nonumber \\
  = 
|\det {\mathcal A}^{-1}| \, \hat{f}_+
    ({\mathcal U}_{\vect{e}_1}\overline{{\mathcal A}^{-1}}\,{\mathcal U}_{\vect{e}_1}\vect{u}),
\label{eq:fptrafo}
\end{gather}
which is very similar to the previous calculation for $\hat{f}_-$. 
The only difference is that in line 4 we insert 
$1={\mathcal U}_{\vect{e}_1}{\mathcal U}_{\vect{e}_1}$ before $\overline{{\mathcal A}^{-1}}$,
and that the argument of the transformed $\hat{f}_+$ now has the \textit{reflected} version 
${\mathcal U}_{\vect{e}_1}\overline{{\mathcal A}^{-1}}\,{\mathcal U}_{\vect{e}_1}$
of the adjoint inverse transformation $\overline{{\mathcal A}^{-1}}$. 
Recombining $\hat{f}_+$ and $\hat{f}_-$ we get from (\ref{eq:fmtrafo}) and (\ref{eq:fptrafo})
\begin{thm}[$GL(\R^2)$ transformation properties of the QFT]
\label{th:GLtrafo}
  The QFT of a quaternion module function $f \in L^2(\R^2;\HQ)$ with a 
  $GL(\R^2)$ transformation ${\mathcal A}$ of its vector argument is given by 
  \begin{gather}
    \widehat{{f}({\mathcal A}\vect{x})}(\vect{u}) 
    = |\det {\mathcal A}^{-1}|\, \{\, \hat{f}_-(\overline{{\mathcal A}^{-1}}\vect{u})
      + \, \hat{f}_+
        ({\mathcal U}_{\vect{e}_1}\overline{{\mathcal A}^{-1}}\,
         {\mathcal U}_{\vect{e}_1}\vect{u}) \,\} \,\, .       
  \end{gather}
\end{thm}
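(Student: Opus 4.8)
The plan is to assemble the result directly from the two component computations in (\ref{eq:fmtrafo}) and (\ref{eq:fptrafo}), which already carry the full analytic weight of the argument. First I would invoke the split (\ref{eq:fpm}), writing $f = f_+ + f_-$, so that after composing with the transformation, $f({\mathcal A}\vect{x}) = f_+({\mathcal A}\vect{x}) + f_-({\mathcal A}\vect{x})$. Since the QFT is additive over its integrand (immediately from definition \ref{df:QFT}, the integral being $\R$-linear), the transform of the sum is the sum of the transforms,
\[
  \widehat{f({\mathcal A}\vect{x})}(\vect{u})
  = \widehat{f_+({\mathcal A}\vect{x})}(\vect{u})
  + \widehat{f_-({\mathcal A}\vect{x})}(\vect{u}).
\]

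Next I would substitute the two results established just above the statement. Equation (\ref{eq:fmtrafo}) gives the $f_-$ part as $|\det {\mathcal A}^{-1}|\,\hat{f}_-(\overline{{\mathcal A}^{-1}}\vect{u})$, while (\ref{eq:fptrafo}) gives the $f_+$ part as $|\det {\mathcal A}^{-1}|\,\hat{f}_+({\mathcal U}_{\vect{e}_1}\overline{{\mathcal A}^{-1}}\,{\mathcal U}_{\vect{e}_1}\vect{u})$. Factoring out the common Jacobian $|\det {\mathcal A}^{-1}|$ then yields exactly the claimed two-term formula. Because theorem \ref{th:powij} guarantees that the split operation (\ref{eq:fpm}) and the QFT commute, no cross terms between $f_+$ and $f_-$ are generated, so this clean separation into a $\hat{f}_+$ contribution and a $\hat{f}_-$ contribution is forced rather than merely convenient.

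The genuinely delicate work lies not in this recombination but in the two preparatory computations, so the main point to guard is their correctness. In each I would perform the change of variables $\vect{z} = {\mathcal A}\vect{x}$, producing the absolute-value Jacobian $|\det {\mathcal A}^{-1}|$ (the modulus being essential, since a negative determinant reverses the orientation of the integration region), and then rewrite $({\mathcal A}^{-1}\vect{z})\cdot\vect{u}$ as $\vect{z}\cdot(\overline{{\mathcal A}^{-1}}\vect{u})$ via the defining adjoint relation $({\mathcal B}\vect{z})\cdot\vect{u} = \vect{z}\cdot(\overline{{\mathcal B}}\vect{u})$ with ${\mathcal B} = {\mathcal A}^{-1}$. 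For the $f_+$ part the only extra ingredient is the insertion of $1 = {\mathcal U}_{\vect{e}_1}{\mathcal U}_{\vect{e}_1}$ (valid because a reflection is an involution), which transfers the reflection ${\mathcal U}_{\vect{e}_1}$ of the $f_+$ kernel onto the adjoint inverse transformation and leaves the reflected argument ${\mathcal U}_{\vect{e}_1}\overline{{\mathcal A}^{-1}}\,{\mathcal U}_{\vect{e}_1}\vect{u}$. The hardest conceptual point to keep straight is precisely this reflection bookkeeping for $f_+$, tracing how the distinguished role of ${\mathcal U}_{\vect{e}_1}$ survives the adjoint and inversion operations, so that the $f_+$ and $f_-$ arguments differ exactly by the conjugation ${\mathcal U}_{\vect{e}_1}(\,\cdot\,){\mathcal U}_{\vect{e}_1}$.
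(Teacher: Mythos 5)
Your proposal is correct and follows exactly the paper's route: the paper also obtains the theorem by recombining the two component computations (\ref{eq:fmtrafo}) and (\ref{eq:fptrafo}) for $\hat{f}_-$ and $\hat{f}_+$, using the split (\ref{eq:fpm}), the linearity of the integral, and the fact (via theorem \ref{th:powij}) that the split commutes with the QFT. Your recapitulation of the change of variables, the adjoint identity, and the insertion of $1={\mathcal U}_{\vect{e}_1}{\mathcal U}_{\vect{e}_1}$ matches the paper's derivation step for step.
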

Theorem \ref{th:GLtrafo} corresponds exactly to equation
(\ref{eq:Atrafo}) with (\ref{eq:Btrafos}), if the matrix expression 
(\ref{eq:Amatrix}) is used for 
the automorphism ${\mathcal A}$ and if the $f_{\pm}$ split formulas 
(\ref{eq:fpm}) are used. The four terms of (\ref{eq:Atrafo}) together with
all the matrices involved therefore get in theorem \ref{th:GLtrafo} a clear
geometric interpretation. In order to be even more explicit we specify below the
full geometric algebra expressions for stretches, reflections and rotations.

\subsection{Explicit examples: stretches, reflections \& rotations}

To deepen our geometrical understanding we now look at stretches, reflections (and
rotations) which \textit{compose} every general automorphism ${\mathcal A} \in GL(\R^2)$.

Stretches expressed by
${\mathcal A}_s \vect{x}= a x \vect{e}_1+ b y \vect{e}_2, 
\text{ with } a,b\in \R\setminus \{0\}$,
result because of 
$\,\,{\mathcal U}_{\vect{e}_1}{\mathcal A}_s \, {\mathcal U}_{\vect{e}_1}={\mathcal A}_s$ in
\be
  \widehat{f({\mathcal A}_s \vect{x})}(\vect{u}) 
  = |\det {\mathcal A}_s^{-1}|\, \hat{f}({\mathcal A}_s^{-1}\vect{u})
  = \frac{1}{|ab|}\hat{f}(\frac{u}{a}\vect{e}_1+\frac{v}{b}\vect{e}_2).
\label{eq:dil}
\end{equation}

Reflections in hyperplanes normal to 
  $\vect{a}$ expressed by
  ${\mathcal U}_{\vect{a}}\vect{x} = -\vect{a}^{-1}\vect{x}\vect{a}$ ,
  with
  $\,\,|\det {\mathcal U}_{\vect{a}}| = 1\, ,$
  $\,\,\overline{{\mathcal U}_{\vect{a}}} ={\mathcal U}_{\vect{a}}$ ,
  $\,\, {\mathcal U}_{\vect{e}_1}{{\mathcal U}_{\vect{a}}}\,{\mathcal U}_{\vect{e}_1} = 
   {\mathcal U}_{\vect{a}^{\prime}}\, ,$ 
  and $\vect{a}^{\prime}={\mathcal U}_{\vect{e}_1}\vect{a}$
  result in
  \be
  \widehat{f({\mathcal U}_{\vect{a}}\vect{x})}(\vect{u}) 
  = \hat{f}_-({{\mathcal U}_{\vect{a}}}\vect{u})
      + \, \hat{f}_+
    ({\mathcal U}_{\vect{a}^{\prime}}\vect{u}).
  \end{equation}

Finally rotations (equivalent to two reflections at lines subtending 
half the rotation angle) expressed by
  $\,\,{\mathcal R}_{\vect{a}\vect{b}}\vect{x} 
   = {\mathcal U}_{\vect{b}}{\mathcal U}_{\vect{a}}\vect{x}$ ,
  with
  $\,|\det {\mathcal R}_{\vect{a}\vect{b}}| = 1\, ,$  
  ${\mathcal R}^{-1}_{\vect{a}\vect{b}} = {\mathcal R}_{\vect{b}\vect{a}}\, ,$ 
  and
  ${\mathcal U}_{\vect{a}^{\prime}}{\mathcal U}_{\vect{b}^{\prime}} 
   = {\mathcal U}_{\vect{e}_1} {\mathcal R}^{-1}_{\vect{a}\vect{b}}{\mathcal U}_{\vect{e}_1}\, ,$ 
  result in
  \begin{gather}
    \widehat{f({\mathcal R}_{\vect{a}\vect{b}}\vect{x})}(\vect{u}) 
      =\hat{f}({\mathcal U}_{\vect{b}}{\mathcal U}_{\vect{a}}\vect{x})(\vect{u}) 
      = \hat{f}_-({{\mathcal U}_{\vect{a}}{\mathcal U}_{\vect{b}}}\vect{u})
        + \, \hat{f}_+
        ({\mathcal U}_{\vect{a}^{\prime}}{\mathcal U}_{\vect{b}^{\prime}}\vect{u})
    \nonumber \\
      = \hat{f}_-({\mathcal R}^{-1}_{\vect{a}\vect{b}}\vect{u})
        + \, \hat{f}_+
        ({\mathcal U}_{\vect{e}_1} {\mathcal R}^{-1}_{\vect{a}\vect{b}}\,{\mathcal U}_{\vect{e}_1}\vect{u})
  \label{eq:frot} 
  \end{gather}
In two dimensions\footnote{%
In section \ref{sc:gen} we generalize theorem \ref{th:GLtrafo} to higher dimensions,
but for rotations the expression for $\hat{f}_+$ on the right hand side of (\ref{eq:frot2D})
will in general not be valid for higher dimensions.
}
 the formula for rotations of the spatial argument of a quaternion
module function $f$ subject to the QFT can be further simplified to
 \begin{gather}
  \widehat{f({\mathcal R}_{\vect{a}\vect{b}}\vect{x})}(\vect{u}) 
  \stackrel{\mbox{ in 2D }}{=}
  \hat{f}_-({\mathcal R}^{-1}_{\vect{a}\vect{b}}\vect{u})
      + \, \hat{f}_+
    ( {\mathcal R}_{\vect{a}\vect{b}}\vect{u})\, ,
  \label{eq:frot2D}
  \end{gather}
because in two dimensions we have 
${\mathcal U}_{\vect{e}_1} {\mathcal R}^{-1}_{\vect{a}\vect{b}}\,{\mathcal U}_{\vect{e}_1}
 ={\mathcal R}_{\vect{a}\vect{b}}\, .$

Theorems \ref{th:fpmtrafo} and \ref{th:GLtrafo} together with their clear 
geometric interpretation with the help of geometric algebra pave the way for
wide-ranging generalizations of the QFT of definition \ref{df:QFT}. In this
paper we cannot fully treat all possible generalizations. But in order to
demonstrate the method, we show in the following section 
how to generalize the QFT to a new
general non-commutative Fourier transformation of functions from 
spacetime $\R^{3,1}$ to the spacetime algebra \cite{DH:STA} of $\R^{3,1}$, i.e. to 
the Clifford geometric algebra $Cl_{3,1}\, .$
An intermediate step will be the generalization to a new Fourier transform
of functions from spacetime $\R^{3,1}$ to a volume-time subalgebra
of the spacetime algebra.

\section{Generalization of the QFT to a new spacetime algebra Fourier transform
         \label{sc:gen}}

We begin by recalling quaternion algebra to Clifford subalgebra isomorphisms 
such as $\HQ \cong Cl(0,2) \cong Cl^+(3,0)\, .$ 
Such isomorphisms together with the generalized $GL(\R^{n,m})$ 
transformation laws for 
$\{\hat{f}_{\pm}({\mathcal A}\vect{x})\}(\vect{u})$ allow us now to
generalize the QFT to higher dimensions. 

This indeed opens up a vast new field of related
\textit{multivector Fourier transforms}, which are in general
non-commutative.

\subsection{QFT generalization to volume-time functions \label{sc:VtFT}}

One of these quaternion algebra to Clifford sub-algebra isomorphisms 
that is of particular relevance in physics 
exists with a subalgebra of the \textit{spacetime algebra} $Cl_{3,1}.$ 
We express this isomorphism by introducing an
orthonormal (grade 1) vector basis for $\R^{3,1}$
\begin{equation}
  \{\vect{e}_0,\vect{e}_1, \vect{e}_2, \vect{e}_3  \},
  \quad
  -\vect{e}_0^2 = \vect{e}_1^2 = \vect{e}_2^2 = \vect{e}_3^2 = 1.
\end{equation}
Using this vector basis of $\R^{3,1},$ the spatial unit volume trivector $i_3$ 
and total four-dimensional (hyper volume) pseudoscalar $i_4$ can be expressed 
by
\begin{equation}
  i_3 = \vect{e}_1\vect{e}_2\vect{e}_3, \quad i_3^2 = -1,
  \quad
  i_4 = \vect{e}_0\vect{e}_1\vect{e}_2\vect{e}_3, \quad i_4^2=-1.
\end{equation}
We emphasize the fact that the vector $\vect{e}_0$, the 3D volume trivector
$i_3,$ and the 4D pseudoscalar $i_4,$ all square to minus one. Examining
the geometric algebra multiplication laws of $\vect{e}_0,$ $i_3,$ and $i_4,$
shows indeed that the arising subalgebra $V_t$ of the spacetime algebra is
isomorphic (see sections 4.1 and 4.2 of \cite{PG:quat}) to the quaternion algebra $\HQ$
\be
  V_t \cong \HQ \, ,
  \label{eq:QisomVt}
\end{equation}
where we use $V_t$ to denote the
volume-time subalgebra of $Cl_{3,1}$ with subalgebra basis 
\begin{equation}
  \{1, \vect{e}_0, i_3, i_4 \}.
\end{equation}
Note especially that
\be
  i_3 = \vect{e}_0 (-i_4) = \vect{e}_0 \,i_4^{-1} = \vect{e}_0^{*},
  \label{eq:i3dualt}
\end{equation}
which shows that $i_3$ is \textit{dual} to $\vect{e}_0$ in $Cl_{3,1}$.

Based on the isomorphism (\ref{eq:QisomVt}) we now define a Fourier transform
for volume-time module functions $ f \in L^2(\R^{3,1};V_t) $. 
\begin{defn}[Volume-time Fourier transform (VtFT)]
\label{df:SFT}
The volume-time Fourier transform 
$\overset{\circ}{f}: \R^{3,1} \rightarrow V_t $ of volume-time module functions
$ f \in L^2(\R^{3,1};V_t) $, with spacetime vectors
$\vect{x}= t\vect{e}_0 + \vec{x} \in \R^{3,1}, \,\,
 \vec{x} = x\vect{e}_1+y\vect{e}_2+z\vect{e}_3 \in \R^{3} $,
 and spacetime frequency vectors
$\vect{u}= s\vect{e}_0 + \vec{u} \in \R^{3,1}, \,\,
 \vec{u} = u\vect{e}_1+v\vect{e}_2+w\vect{e}_3 \in \R^{3} $ 
is defined as
\be
  \overset{\circ}{f}(\vect{u}) 
  = \int_{\R^{3,1}} e^{-\vect{e}_0 \,ts} f(\vect{x}) \,
    e^{-i_3 \vec{x}\cdot \vec{u}} d^4\vect{x} \,,
  \label{eq:sft}
\end{equation}
with the differential spacetime integration volume $d^4\vect{x} = dt dx dy dz \,.$
\end{defn}
The VtFT can be \textit{inverted} in close analogy to (\ref{eq:iqft}) by using
\be
  f(\vect{x}) 
  = \frac{1}{(2\pi)^4}\int_{\R^{3,1}} e^{\vect{e}_0 \, ts} \,
    \overset{\circ}{f}(\vect{u}) \,\,e^{i_3 \vec{x}\cdot \vec{u}} d^4\vect{u}\,,
\label{eq:isft}
\end{equation}
with $d^4\vect{u} = ds du dv dw \,.$

The $f_{\pm}$ split (\ref{eq:fpm}) combined with the isomorphism (\ref{eq:QisomVt}) 
now yields for volume-time module functions $ f \in L^2(\R^{3,1};V_t) $
\be
  f = f_+ + f_-, \quad
  f_+ = \frac{1}{2}(f + \vect{e}_0 f i_3), \quad 
  f_- = \frac{1}{2}(f - \vect{e}_0 f i_3).
\label{eq:fpms}
\end{equation}
Rewriting the split (\ref{eq:fpms}) with the duality relation (\ref{eq:i3dualt})
to
\be
  f_{\pm} = \frac{1}{2}(f \pm \vect{e}_0 f \vect{e}_0^{*})
\label{eq:fpmsd}
\end{equation}
shows that it naturally only depends on the physical spacetime split,
i.e. on the choice of the time direction $\vect{e}_0$.
Applying our new VtFT of definition \ref{df:SFT} to the split 
functions $f_{\pm}$ of (\ref{eq:fpmsd}) results in a VtFT formula 
which corresponds to theorem \ref{th:fpmtrafo}
\begin{gather}
  \overset{\circ}{f}_{\pm} 
  \stackrel{}{=} \int_{\R^{3,1}}
    f_{\pm}\,e^{-i_3 (\, \vec{x}\cdot \vec{u} \mp\, ts\,)}d^4x
  \stackrel{}{=} \int_{\R^{3,1}}
    e^{-\vect{e}_0 (\, ts \mp \vec{x}\cdot \vec{u}\,)}f_{\pm}\,d^4x \,.
  \label{eq:SFTfpm}
\end{gather}
Note especially that the $\overset{\circ}{f}_{+}$
part in (\ref{eq:SFTfpm}) has the kernel with the flat Minkowski 
metric $ts - \vec{x}\cdot \vec{u}$ in the exponent. (Compare section
\ref{sc:SplitInt} for further interpretation.)

Definition \ref{df:SFT} preserves the form of the $GL$ transformation properties of 
section \ref{sc:GLprop}. We get the $GL(\R^{3,1})$ transformation
properties of (\ref{eq:sft}) simply by inserting in theorem \ref{th:GLtrafo}
transformations ${\mathcal A} \in GL(\R^{3,1})$ and replacing
${\mathcal U}_{\vect{e}_1}$ by ${\mathcal U}_{\vect{e}_0}$. 
\begin{thm}[$GL(\R^{3,1})$ transformation properties of the VtFT]
\label{th:VtGLtrafo}
  The VtFT of a $V_t$ module function $f \in L^2(\R^2;V_t)$ with a 
  $GL(\R^{3,1})$ transformation ${\mathcal A}$ of its vector argument is given by 
  \begin{gather}
    \{f({\mathcal A}\vect{x})\}^{\circ}(\vect{u}) 
    = |\det {\mathcal A}^{-1}|\, \{\, \overset{\circ}{f}_-(\overline{{\mathcal A}^{-1}}\vect{u})
      + \, \overset{\circ}{f}_+
        ({\mathcal U}_{\vect{e}_0}\overline{{\mathcal A}^{-1}}\,
         {\mathcal U}_{\vect{e}_0}\vect{u}) \,\} \,\, .     
  \label{eq:VtGLtrafo}  
  \end{gather}
\end{thm}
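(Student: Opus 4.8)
The plan is to reproduce, step by step, the proof of Theorem~\ref{th:GLtrafo}, carrying out the dictionary $\R^2\to\R^{3,1}$, $\sqj\to i_3$, $\vect{e}_1\to\vect{e}_0$ dictated by the isomorphism~(\ref{eq:QisomVt}) and by Definition~\ref{df:SFT}. The starting point is the split representation~(\ref{eq:SFTfpm}), the VtFT analogue of Theorem~\ref{th:fpmtrafo}. First I would rewrite its two exponents coordinate-free with respect to the \emph{positive-definite} pairing $\vect{x}\cdot\vect{u}=ts+\vec{x}\cdot\vec{u}$ on $\R^{3,1}$. Since the geometric-algebra reflection $\mathcal{U}_{\vect{e}_0}\vect{u}=-\vect{e}_0^{-1}\vect{u}\vect{e}_0$ flips exactly the $\vect{e}_0$-component (so that $\mathcal{U}_{\vect{e}_0}\vect{u}=-s\vect{e}_0+\vec{u}$), one checks that $ts+\vec{x}\cdot\vec{u}$ is this Euclidean pairing of $\vect{x}$ with $\vect{u}$, while $\vec{x}\cdot\vec{u}-ts$ is the Euclidean pairing of $\vect{x}$ with $\mathcal{U}_{\vect{e}_0}\vect{u}$. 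Thus $\mathcal{U}_{\vect{e}_0}$ plays here precisely the role that $\mathcal{U}_{\vect{e}_1}$ plays in~(\ref{eq:fmtrafo})--(\ref{eq:fptrafo}), and the two split parts read
\begin{gather}
  \overset{\circ}{f}_- = \int_{\R^{3,1}} f_-\, e^{-i_3(ts+\vec{x}\cdot\vec{u})}\, d^4x, \quad
  \overset{\circ}{f}_+ = \int_{\R^{3,1}} f_+\, e^{-i_3(\vec{x}\cdot\vec{u}-ts)}\, d^4x .
\end{gather}

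Next I would treat each part by the substitution $\vect{z}=\mathcal{A}\vect{x}$, exactly as in~(\ref{eq:fmtrafo}). For the minus part this gives $d^4x=|\det\mathcal{A}^{-1}|\,d^4z$, and transferring $\mathcal{A}^{-1}$ onto the frequency variable through the adjoint identity $(\mathcal{A}^{-1}\vect{z})\cdot\vect{u}=\vect{z}\cdot(\overline{\mathcal{A}^{-1}}\vect{u})$ for the Euclidean pairing yields $\{f_-(\mathcal{A}\vect{x})\}^{\circ}(\vect{u})=|\det\mathcal{A}^{-1}|\,\overset{\circ}{f}_-(\overline{\mathcal{A}^{-1}}\vect{u})$. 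For the plus part the same substitution produces the argument $\overline{\mathcal{A}^{-1}}\mathcal{U}_{\vect{e}_0}\vect{u}$ inside the Euclidean pairing; inserting $1=\mathcal{U}_{\vect{e}_0}\mathcal{U}_{\vect{e}_0}$ (valid because $\mathcal{U}_{\vect{e}_0}^2=\mathrm{id}$) rewrites the exponent as the pairing of $\vect{z}$ with $\mathcal{U}_{\vect{e}_0}(\mathcal{U}_{\vect{e}_0}\overline{\mathcal{A}^{-1}}\mathcal{U}_{\vect{e}_0}\vect{u})$, which is precisely the defining kernel of $\overset{\circ}{f}_+$ evaluated at $\mathcal{U}_{\vect{e}_0}\overline{\mathcal{A}^{-1}}\mathcal{U}_{\vect{e}_0}\vect{u}$. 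Hence $\{f_+(\mathcal{A}\vect{x})\}^{\circ}(\vect{u})=|\det\mathcal{A}^{-1}|\,\overset{\circ}{f}_+(\mathcal{U}_{\vect{e}_0}\overline{\mathcal{A}^{-1}}\mathcal{U}_{\vect{e}_0}\vect{u})$.

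Finally I would add the two contributions. Because the VtFT is linear and the split~(\ref{eq:fpms}) commutes with it (the analogue of Theorem~\ref{th:powij}, already built into~(\ref{eq:SFTfpm})), we have $\{f(\mathcal{A}\vect{x})\}^{\circ}=\{f_+(\mathcal{A}\vect{x})\}^{\circ}+\{f_-(\mathcal{A}\vect{x})\}^{\circ}$, and summing the two results above gives exactly~(\ref{eq:VtGLtrafo}). The only genuinely new point to verify --- and where I expect the main obstacle --- is the compatibility between the Clifford reflection $\mathcal{U}_{\vect{e}_0}$, defined through products in $Cl_{3,1}$ with $\vect{e}_0^2=-1$, and the positive-definite pairing governing the Fourier angles: one must confirm that $\mathcal{U}_{\vect{e}_0}$ acts as the honest time reversal $-s\vect{e}_0+\vec{u}$, that it is an involution, and that the adjoint $\overline{\mathcal{A}^{-1}}$ is taken with respect to this same Euclidean pairing rather than the indefinite Minkowski metric of $\R^{3,1}$. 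Once these identifications are pinned down, the remainder is a verbatim transcription of the $\R^2$ argument.
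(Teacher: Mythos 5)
Your proposal is correct and follows exactly the route the paper intends: the paper gives no separate proof of Theorem~\ref{th:VtGLtrafo}, stating only that one inserts ${\mathcal A}\in GL(\R^{3,1})$ into Theorem~\ref{th:GLtrafo} and replaces ${\mathcal U}_{\vect{e}_1}$ by ${\mathcal U}_{\vect{e}_0}$, which is precisely the translation you carry out in detail via (\ref{eq:SFTfpm}) and the substitution $\vect{z}={\mathcal A}\vect{x}$. The compatibility point you flag is also resolved the way you expect: with $\vect{e}_0^2=-1$ one has $\vect{e}_0^{-1}=-\vect{e}_0$, so ${\mathcal U}_{\vect{e}_0}$ is the involutive time reversal $s\vect{e}_0+\vec{u}\mapsto -s\vect{e}_0+\vec{u}$, and the adjoint is indeed taken with respect to the positive-definite pairing $ts+\vec{x}\cdot\vec{u}$ appearing in the kernels.
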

In physical applications proper Lorentz transformations with $|\det {\mathcal A}|=1$
are most relevant, so the $|\det {\mathcal A}^{-1}|$ factor in
(\ref{eq:VtGLtrafo})  can then naturally be omitted.

For all kinds of applications it is of interest to know whether we can
push the QFT generalization established by the VtFT for 
volume-time module functions $ f \in L^2(\R^{3,1};V_t) $ even further, 
i.e. if even more general spacetime algebra functions can be treated
meaningfully with the VtFT. That this is indeed the case will be shown 
in the next subsection.

\subsection{Generalization to full spacetime algebra functions}

We now explain how we can drop in the VtFT definition
\ref{df:SFT} the restriction to volume-time functions
$ f \in L^2(\R^{3,1};V_t). $ 
The key to this is found in the commutativity of 
the unit volume trivector $i_3$ of the right side exponential factor
in (\ref{eq:sft})
with all spatial vectors $\{ \vect{e}_1, \vect{e}_2, \vect{e}_3 \}$
\be
  i_3 \,\vect{e}_k = \vect{e}_k\, i_3\,, \quad 1 \leq k \leq 3.
  \label{eq:i3comm}
\end{equation} 
This directly leads us to the \textit{right linearity} of the VtFT 
\begin{gather}
  \{f\alpha\}^{\circ}\,(\vect{u}) 
  = \int_{\R^{3,1}} e^{-\vect{e}_0 \,ts} f(\vect{x})\,\alpha \,\,
    e^{-i_3 \vec{x}\cdot \vec{u}} d^4\vect{x}  
  \nonumber \\
  = \int_{\R^{3,1}} e^{-\vect{e}_0 \,ts} f(\vect{x})\,\,
    e^{-i_3 \vec{x}\cdot \vec{u}} d^4\vect{x}\,\, \alpha  
  = \overset{\circ}{f}(\vect{u})\, \alpha,    
    \quad \forall \text{ const. } \alpha \in Cl_{3,0} ,
  \label{eq:sftlin}
\end{gather}
where $Cl_{3,0}$ is the eight-dimensional Clifford geometric algebra of $\R^{3,0}$, 
i.e. the 3D space subalgebra of $Cl_{3,1}$ spanned by
\be
  \{1, \vect{e}_1, \vect{e}_2, \vect{e}_3,  
    \vect{e}_2\vect{e}_3, \vect{e}_3\vect{e}_1, \vect{e}_1\vect{e}_2, i_3 \}.
\end{equation}
Naturally this right linearity also holds for the inverse transformation
\be
  f(\vect{x}) \, \alpha
  = \frac{1}{(2\pi)^4}\int_{\R^{3,1}} e^{\vect{e}_0 \, ts} \,
    \overset{\circ}{f}(\vect{u}) \alpha \,\,e^{i_3 \vec{x}\cdot \vec{u}} d^4\vect{u}
    \quad \forall \text{ const. } \alpha \in Cl_{3,0}.
\label{eq:isftlin}
\end{equation}
Now all 16 basis multivectors of $Cl_{3,1}$ can be obtained by successive geometric
multiplications of $1$ and $\vect{e}_0$ (or alternatively of $i_3$ and $i_4$, etc.)
with the three spatial vectors $\{ \vect{e}_1, \vect{e}_2, \vect{e}_3 \}$ from the 
right\footnote{%
$Cl_{3,1}$ is also isomorphic to the tensor product $V_t \otimes Cl_{3,0}^+$, with
$V_t$ defined as in section \ref{sc:VtFT} and $Cl_{3,0}^+$ defined as in section
\ref{sc:bquat}. (See \cite{PG:quat}, sections 4.1 and 4.2.)
}
\begin{gather}
  \{ 1, \vect{e}_1, \vect{e}_2, \vect{e}_3,  
    \vect{e}_2\vect{e}_3, \vect{e}_3\vect{e}_1, \vect{e}_1\vect{e}_2, i_3,
  \nonumber \\
    \vect{e}_0, \vect{e}_0\vect{e}_1, \vect{e}_0\vect{e}_2, \vect{e}_0\vect{e}_3,  
    \vect{e}_0\vect{e}_2\vect{e}_3, \vect{e}_0\vect{e}_3\vect{e}_1, 
    \vect{e}_0\vect{e}_1\vect{e}_2, \vect{e}_0i_3\}.
\end{gather}

We now have laid all the groundwork for the full spacetime algebra generalization of the
VtFT of definition \ref{df:SFT}:
\begin{defn}[Spacetime Fourier transform (SFT)]
\label{df:SFTgen}
  The SFT 
$\overset{\diamond}{f}: \R^{3,1} \rightarrow Cl_{3,1} $ 
of a (16 dimensional) spacetime algebra $Cl_{3,1}$ module function $f \in L^2(\R^{3,1};Cl_{3,1})$
with spacetime vectors
$\vect{x}= t\vect{e}_0 + \vec{x} \in \R^{3,1}, \,\,
 \vec{x} = x\vect{e}_1+y\vect{e}_2+z\vect{e}_3 \in \R^{3} $,
 and spacetime frequency vectors
$\vect{u}= s\vect{e}_0 + \vec{u} \in \R^{3,1}, \,\,
 \vec{u} = u\vect{e}_1+v\vect{e}_2+w\vect{e}_3 \in \R^{3} $ 
is defined by
\be
  \overset{\diamond}{f}(\vect{u}) 
  = \int_{\R^{3,1}} e^{-\vect{e}_0 \,ts} f(\vect{x}) \,
    e^{-i_3 \vec{x}\cdot \vec{u}} d^4\vect{x} \,,
  \label{eq:sftgen}
\end{equation}
with $d^4\vect{x} = dt dx dy dz \,.$
\end{defn}
Because of (\ref{eq:sftlin}) definition \ref{df:SFTgen} is fully compatible with 
definition \ref{df:SFT}, since
(\ref{eq:sftgen}) is nothing but a (right) linear combination of (\ref{eq:sft}). To
show this, we can use $Cl_{3,1} \cong V_t \otimes Cl_{3,0}^+$ or we can
e.g. rewrite a general spacetime algebra module 
function $f \in L^2(\R^{3,1};Cl_{3,1})$
as a (right) linear combination of four volume-time subalgebra module $L^2(\R^{3,1};V_t)$ 
functions  
\begin{gather}
  f = f_s + f_1 \vect{e}_1+ f_2 \vect{e}_2 +f_3 \vect{e}_3 
    +f_{23} \vect{e}_2\vect{e}_3 + f_{31} \vect{e}_3\vect{e}_1 + f_{12} \vect{e}_1\vect{e}_2
    +f_{123} i_3
    + f_{0} \vect{e}_0 +
  \nonumber \\
     + f_{01} \vect{e}_0\vect{e}_1 + f_{02} \vect{e}_0\vect{e}_2 
     + f_{03} \vect{e}_0\vect{e}_3 
     + f_{023} \vect{e}_0\vect{e}_2\vect{e}_3 + f_{031} \vect{e}_0\vect{e}_3\vect{e}_1
     + f_{012} \vect{e}_0\vect{e}_1\vect{e}_2  + f_{4}i_4
  \nonumber \\
    = f_s + f_{0} \vect{e}_0 + f_{123} i_3 + f_{4}i_4 
      + \{f_1 + f_{01} \vect{e}_0 + f_{23}i_3 + f_{023} i_4 \} \, \vect{e}_1 +
  \nonumber \\
      + \{f_2 + f_{02} \vect{e}_0 + f_{31}i_3 + f_{031} i_4 \} \, \vect{e}_2 
      + \{f_3 + f_{03} \vect{e}_0 + f_{12}i_3 + f_{012} i_4 \} \, \vect{e}_3.
  \label{eq:7Vtfs}
\end{gather}
The four $L^2(\R^{3,1};V_t)$ functions of (\ref{eq:7Vtfs}) are
$\{f_s + f_{0} \vect{e}_0 + f_{123} i_3 + f_{4}i_4,\,
   f_1 + f_{01} \vect{e}_0 + f_{23}i_3 + f_{023} i_4,\,
   f_2 + f_{02} \vect{e}_0 + f_{31}i_3 + f_{031} i_4,\,
   f_3 + f_{03} \vect{e}_0 + f_{12}i_3 + f_{012} i_4
\}$,
where all 16 coefficient functions $\{f_s, f_0, f_1, \cdots, f_4 \}$ belong to $L^2(\R^{3,1},\R)$.

Because of (\ref{eq:isftlin}) the general SFT of Clifford module $L^2(\R^{3,1};Cl_{3,1})$ functions
of definition \ref{df:SFTgen} is also invertible
\be
  f(\vect{x}) 
  = \frac{1}{(2\pi)^4}\int_{\R^{3,1}} e^{\vect{e}_0 \, ts} \,
    \overset{\diamond}{f}(\vect{u}) \,\,e^{i_3 \vec{x}\cdot \vec{u}} d^{\,4}\vect{u} \,\, .
\label{eq:gsftinv}
\end{equation}

\subsection{SFT of $f_{\pm}$ split parts and physical interpretation \label{sc:SplitInt}}

Further application of analogous (right) linearity arguments also yield that
the split (\ref{eq:fpms}) and (\ref{eq:fpmsd})  can also be applied to general multivector Clifford module 
functions $ f \in L^2(\R^{3,1};Cl_{3,1}) $. In (\ref{eq:fpms}) and (\ref{eq:fpmsd})  we can thus
simply replace the $L^2(\R^{3,1};V_t)$ functions by $L^2(\R^{3,1};Cl_{3,1}) $ 
functions\footnote{%
Again the $f_{\pm}$ split (\ref{eq:fpmsd}) solely depends on the choice of time direction 
$\vect{e}_0$.
}.
This carries on to the general SFTs of the split functions $f_{\pm}$, which are
formally identical to (\ref{eq:SFTfpm}) if we again replace 
the $L^2(\R^{3,1};V_t)$ functions by $L^2(\R^{3,1};Cl_{3,1}) $ functions. 

We can therefore rewrite the SFT (\ref{eq:sftgen}) for $f \in L^2(\R^{3,1};Cl_{3,1}) $ as
\begin{gather}
  \overset{\diamond}{f} = 
  \overset{\diamond}{f}_+ + \overset{\diamond}{f}_-
  \stackrel{}{=} \int_{\R^{3,1}} f_{+}\,e^{-i_3 (\, \vec{x}\cdot \vec{u} -\, ts\,)} d^4x
               + \int_{\R^{3,1}} f_{-}\,e^{-i_3 (\, \vec{x}\cdot \vec{u} +\, ts\,)}  d^4x
  \nonumber \\
  \stackrel{}{=} \int_{\R^{3,1}} e^{-\vect{e}_0 (\, ts - \vec{x}\cdot \vec{u}\,)}f_{+}\,d^4x
    + \int_{\R^{3,1}} e^{-\vect{e}_0 (\, ts + \vec{x}\cdot \vec{u}\,)}f_{-}\,d^4x \,.
  \label{eq:SFTfpmrcomb}
\end{gather}
Complex spacetime Fourier transformations, with 
$\exp\{-i(\vec{x}\cdot\vec{u}-ts)\}$ (where $i \in \C $) as the related complex kernel,
are e.g. used for electromagnetic fields in spatially dispersive media \cite{APW:emdispm}
or in electromagnetic wavelet theory \cite{GK:phwav}. 

In physics $f_+$ can be interpreted as (time dependent) multivector amplitude of a 
\textit{rightward} (forward) moving wave packet, and $f_-$ as that of a 
\textit{leftward} (backward)
moving wave packet. But we emphasize that both the non-commutative multivector
structure and the geometric interpretation (e.g. of $i_3$ as oriented 3D spatial volume
trivector) go beyond conventional treatment.

We get the consequent generalization of theorem \ref{th:GLtrafo},
i.e. the $GL(\R^{3,1})$ transformation properties of the SFT in the form of
\begin{thm}[$GL(\R^{3,1})$ transformation properties of the SFT]
\label{th:SGLtrafo}
  The SFT of a $Cl_{3,1}$ module function $f \in L^2(\R^2;Cl_{3,1})$ with a 
  $GL(\R^{3,1})$ transformation ${\mathcal A}$ of its vector argument is given by 
  \begin{gather}
    \{f({\mathcal A}\vect{x})\}^{\diamond}(\vect{u}) 
    = |\det {\mathcal A}^{-1}|\, \{\, \overset{\diamond}{f}_-(\overline{{\mathcal A}^{-1}}\vect{u})
      + \, \overset{\diamond}{f}_+
        ({\mathcal U}_{\vect{e}_0}\overline{{\mathcal A}^{-1}}\,
         {\mathcal U}_{\vect{e}_0}\vect{u}) \,\} \,\, .     
  \label{eq:SGLtrafo}  
  \end{gather}
\end{thm}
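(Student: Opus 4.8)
The plan is to mirror the proof of Theorem \ref{th:GLtrafo} essentially line by line, since section \ref{sc:SplitInt} has already supplied all the structural ingredients that made the quaternionic argument work, with $\vect{e}_1$ everywhere replaced by $\vect{e}_0$. First I would record that the split commutes with the SFT: because $\vect{e}_0$ commutes with $e^{-\vect{e}_0 ts}$ and $i_3$ commutes with $e^{-i_3\vec{x}\cdot\vec{u}}$, one checks directly that $\vect{e}_0\overset{\diamond}{f}i_3 = \{\vect{e}_0 f i_3\}^{\diamond}$, so that $(\overset{\diamond}{f})_\pm = \{f_\pm\}^{\diamond} =: \overset{\diamond}{f}_\pm$ and hence $\overset{\diamond}{f} = \overset{\diamond}{f}_+ + \overset{\diamond}{f}_-$ as already recorded in (\ref{eq:SFTfpmrcomb}). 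This reduces the theorem to transforming the two complex pieces separately.

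Next I would rewrite the exponents of the split forms (\ref{eq:SFTfpm}) coordinate-free, using the ordinary Euclidean inner product on $\R^{3,1}\cong\R^4$ that is implicit in the Lebesgue measure $d^4x$ of the kernel. Writing $\vect{x}\cdot\vect{u} = ts + \vec{x}\cdot\vec{u}$ and noting that the geometric-algebra reflection (\ref{eq:garef}) in the hyperplane normal to $\vect{e}_0$ satisfies ${\mathcal U}_{\vect{e}_0}\vect{u} = -s\vect{e}_0 + \vec{u}$ (it negates only the time component), I obtain $\vec{x}\cdot\vec{u}+ts = \vect{x}\cdot\vect{u}$ and $\vec{x}\cdot\vec{u}-ts = \vect{x}\cdot({\mathcal U}_{\vect{e}_0}\vect{u})$. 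This puts $\overset{\diamond}{f}_-$ in the form $\int f_- e^{-i_3\vect{x}\cdot\vect{u}}d^4x$ and $\overset{\diamond}{f}_+$ in the form $\int f_+ e^{-i_3\vect{x}\cdot({\mathcal U}_{\vect{e}_0}\vect{u})}d^4x$, exactly the analogues of the forms used in (\ref{eq:fmtrafo}) and (\ref{eq:fptrafo}).

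Then I would run the two changes of variable. Substituting $\vect{z} = {\mathcal A}\vect{x}$ (Jacobian $|\det{\mathcal A}^{-1}|$) and using the adjoint relation $({\mathcal A}^{-1}\vect{z})\cdot\vect{u} = \vect{z}\cdot(\overline{{\mathcal A}^{-1}}\vect{u})$ — with $\overline{\,\cdot\,}$ the adjoint for the same Euclidean product — gives $\{f_-({\mathcal A}\vect{x})\}^{\diamond}(\vect{u}) = |\det{\mathcal A}^{-1}|\,\overset{\diamond}{f}_-(\overline{{\mathcal A}^{-1}}\vect{u})$, verbatim as in (\ref{eq:fmtrafo}). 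For the plus part I would insert $1 = {\mathcal U}_{\vect{e}_0}{\mathcal U}_{\vect{e}_0}$ in front of $\overline{{\mathcal A}^{-1}}$, producing the reflected transformation ${\mathcal U}_{\vect{e}_0}\overline{{\mathcal A}^{-1}}{\mathcal U}_{\vect{e}_0}$ in the argument of the transformed $\overset{\diamond}{f}_+$, exactly as in (\ref{eq:fptrafo}). Adding the two contributions yields (\ref{eq:SGLtrafo}).

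The computations are routine once the set-up is right; the only genuine subtlety — and the place I would be most careful — is the interplay of two different metrics. The reflection ${\mathcal U}_{\vect{e}_0}$ is a geometric-algebra operation governed by the Minkowski square $\vect{e}_0^2 = -1$, yet the inner product appearing in the Fourier kernel (and hence the adjoint $\overline{{\mathcal A}^{-1}}$) is the Euclidean one on $\R^4$. I would verify explicitly that, despite $\vect{e}_0^{-1} = -\vect{e}_0$, the formula ${\mathcal U}_{\vect{e}_0}\vect{x} = -\vect{e}_0^{-1}\vect{x}\vect{e}_0 = \vect{e}_0\vect{x}\vect{e}_0$ still merely negates the $\vect{e}_0$-component and fixes the spatial part, so that it agrees with the Euclidean reflection used to rewrite the angle. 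Everything else is the $\vect{e}_1 \to \vect{e}_0$ transcription already anticipated by Theorem \ref{th:VtGLtrafo}.
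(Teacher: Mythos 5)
Your proposal is correct and follows essentially the same route as the paper, which states Theorem \ref{th:SGLtrafo} without a separate proof as the ``consequent generalization'' of Theorem \ref{th:GLtrafo}: the split commuting with the transform, the coordinate-free rewriting of the kernels of $\overset{\diamond}{f}_{\pm}$, the change of variables $\vect{z}={\mathcal A}\vect{x}$ with the adjoint, and the insertion of $1={\mathcal U}_{\vect{e}_0}{\mathcal U}_{\vect{e}_0}$ are exactly the intended transcription of (\ref{eq:fmtrafo})--(\ref{eq:fptrafo}). Your explicit check that ${\mathcal U}_{\vect{e}_0}\vect{x}=\vect{e}_0\vect{x}\vect{e}_0$ negates only the time component, and that the adjoint must be taken with respect to the Euclidean form implicit in the kernel rather than the Minkowski one, is a useful clarification the paper glosses over.
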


This concludes our brief example of a higher dimensional multivector generalization 
of the QFT for $L^2(\R^{2};\HQ)$ functions to a SFT for $L^2(\R^{3,1};Cl_{3,1}) $ functions. 
We again emphasize that mathematically many other generalizations are in fact possible and 
we expect a number of them to be of great utility in applications.

\section{Conclusions}

We employed a convenient rewriting of quaternions only in terms of $\qi$ and $\qj$, keeping
one to the left and the other to the right; and a quaternion split, 
which in spacetime applications is closely related the choice of the time direction. This allowed
us to investigate a range of properties of the QFT, last but not least the behavior
of the QFT under general linear automorphisms. 

General coordinate free formulation in 
combination with quaternion to Clifford subalgebra isomorphisms opens the door to a wide
range of QFT generalizations. These non-commutative multivector Fourier transforms act
on functions from $\R^{m,n}, \, m,n\in\N_0 $ to Clifford geometric algebras $Cl_{m,n}$
(or appropriate subalgebras). We demonstrated this by establishing two multivector
Fourier transforms: the volume-time and the spacetime Fourier transforms. They await
application, e.g. in the fields of dynamic fluid and gas flows, seismic analysis, 
to electromagnetic phenomena, in short wherever spatial data are recorded with time. 
We expect other generalizations of the QFT obtained by the same methods to
be of great potential use as well.

\section*{Acknowledgements}

I thank God, the Creator:
\textit{How great are your works, O LORD,
       how profound your thoughts!} \cite{NIV:Psalm92v5}.
I thank my family for their total loving support, 
and B. Mawardi, A. Hayashi and O. Yasukura for helpful comments.
I thank the editor S. Krausshar, and the anonymous referees for helpful
suggestions.


\begin{thebibliography}{99}


\bibitem{HS:CAtoGC}
D. Hestenes, G. Sobczyk,
\textit{Clifford Algebra to Geometric Calculus}, 
Kluwer, 1984.


\bibitem{TN:VisCA}
T. Needham, 
\textit{Visual Complex Analysis},
Oxford University Press, 2001. 
O. Rodrigues, 
\textit{Des lois g\'{e}om\'{e}triques qui r\'{e}gissent des d\'{e}placements
d'un syst\`{e}me solide},
Journal de Math\'{e}matiques Pures et Appliqu\'{e}es (Liouville), 
Vol.~5 (1840) p. 380.

          
\bibitem{BDS:CA} 
F. Brackx, R. Delanghe, and F. Sommen, 
\textit{Clifford Analysis}, 
Vol. 76 
of Research Notes in Mathematics, Pitman Advanced Publishing Program, 1982.


\bibitem{GS:ncHCFT} 
T. B\"{u}low, M. Felsberg and G. Sommer,
\textit{Non-commutative Hypercomplex Fourier Transforms 
of Multidimensional Signals}, 
in G. Sommer (ed.),
\textit{Geom. Comp. with Cliff. Alg.,
Theor. Found. and Appl. in Comp. Vision and Robotics,} 
Springer (2001), 187--207.


\bibitem{LMQ:CAFT94} 
C. Li, A. McIntosh and T. Qian, 
\textit{Clifford Algebras, Fourier Transform
and Singular Convolution Operators On Lipschitz Surfaces}, 
Revista Matematica Iberoamericana,
\textbf{10 (3)}, (1994), 665--695.


\bibitem{AM:CAFT96}
A. McIntosh,
\textit{Clifford Algebras, Fourier Theory, Singular Integrals, and 
Harmonic Functions on Lipschitz Domains}, 
chapter 1 of J. Ryan (ed.), 
Clifford Algebras in Analysis and Related Topics, 
CRC Press, Boca Raton, 1996.


\bibitem{TQ:PWT}
T. Qian,
\textit{Paley-Wiener Theorems and Shannon Sampling
in the Clifford Analysis Setting}
in R. Ablamowicz (ed.), Clifford Algebras - 
Applications to Mathematics, Physcis, and Engineering,
Birk\"{a}user, Basel, (2004), 115--124.


\bibitem{ES:CFTonVF} 
J. Ebling and G. Scheuermann,
\textit{ Clifford Fourier Transform on Vector Fields}, 
IEEE Transactions on Visualization and Computer Graphics, \textbf{11 (4)}, 
July/August (2005), 469--479.


\bibitem{HM:CFTUP}
B. Mawardi, E. Hitzer, 
\textit{Clifford Fourier Transformation and
Uncertainty Principle for the Clifford Geometric Algebra $Cl_{3,0}$},
AACA, \textbf{16}(1) (2006), 41--61.


\bibitem{EM:CFaUP} 
E. Hitzer, B. Mawardi,
\emph{Uncertainty Principle 
for the Clifford Geometric Algebra $ Cl_{n,0}$, n = 3(mod 4)
based on Clifford Fourier transform,}  
in Springer SCI book series \textit{Applied and
Numerical Harmonic Analysis}, 2006, pp. 45-54.


\bibitem{HM:CFToMVF} 
E. Hitzer, B. Mawardi,
\textit{Clifford Fourier Transform on Multivector Fields and Uncertainty Principles
  for Dimensions $n=2 \, (\rm mod \, 4)$ and $n=3 \, (\rm mod \, 4)$},
preprint for ICCA7 proceedings. 


\bibitem{TB:thesis}
T. B\"{u}low, 
\textit{Hypercomplex Spectral Signal Representations for 
the Processing and Analysis of Images},
PhD Thesis, Univ. of Kiel, 1999. 


\bibitem{TE:QFTpds}
T.A. Ell,
\textit{Quaternion-Fourier Transforms for Analysis of Two-Dimensional
Linear Time-Invariant Partial Differential Systems},
in Proc. of the 32nd Conf. on Decision and Control, IEEE (1993), 1830--1841.


\bibitem{MF:thesis} M. Felsberg, 
\textit{Low-Level Image Processing with the Structure
Multivector}, PhD thesis, Univ. of Kiel, 2002.


\bibitem{PDC:effQFT}
S.C. Pei, J.J. Ding, J.H. Chang,
Efficient Implementation of Quat. Fourier Transf., 
Convolution, and Correlation by 2-D Complex FFT,
IEEE Trans. on Sig. Proc.\textbf{49}(11) (2001), 2783--2797.


\bibitem{CCZ:comQuat}
F. Catoni, R. Cannata, P. Zampeti,
\textit{An Introduction to Commutative Quaternions}
AACA \textbf{16}(1) (2006), 1--28.


\bibitem{DH:NF1}
D. Hestenes,
\textit{New Foundations for Classical Mechanics}, 
Kluwer, 1999.


\bibitem{CM:DiscGr}
H.S.M. Coxeter, W.O.J. Moser, 
\textit{Generators and Relations for Discrete Groups}, 
Springer, 4th ed., 1980.


\bibitem{EC:GrSimp}
E. Cartan, 
\textit{La g\'{e}om\'{e}trie des groupes simples},
Ann. Mat. Pura Appl. (4) (1927), 209--256.
\textit{Compl\'{e}ment au m\'{e}moire sur la g\'{e}om\'{e}trie 
des groupes simples,}
Ann. Mat. Pura Appl. (4), \textbf{5} (1928), 253--260.


\bibitem{DH:STA}
D. Hestenes,
\textit{Space-Time Algebra},
Gordon and Breach, 1966.


\bibitem{PG:quat}
P. Girard,
\textit{Quaternions, alg\`{e}bre de Clifford et physique relativiste},
Presses polytechniques, et universitaires romandes, 2004.


\bibitem{APW:emdispm}
G. Agarwal, D. Pattanayak, E. Wolf,
\textit{Structure of the Electromagnetic Field
in a Spatially Dispersive Medium},
Phys. Rev. Lett. \textbf{27} (1971), 1022--1025.


\bibitem{GK:phwav}
G. Kaiser,
\textit{Physical wavelets and their sources:
Real physics in complex spacetime},
preprint, arxiv.org/abs/math-ph/0303027



\bibitem{NIV:Psalm92v5}
Psalm 92, verse 5, 
\textit{New Int. Version of the Bible}, 
www.biblegateway.com

\end{thebibliography}
\end{document}